\definecolor{reference}{rgb}{.20,.60,.22}
\definecolor{citation}{rgb}{0,.40,.80}
\DeclareRobustCommand{\em}{%
  \@nomath\em \if b\expandafter\@car\f@series\@nil
  \normalfont \else \bfseries \fi}
\theoremstyle{definition}
\newtheorem{Theor}{Theorem}[subsection]
\newtheorem{Lemma}[Theor]{Lemma}
\newtheorem{Prop}[Theor]{Proposition}
\newtheorem{Rem}[Theor]{Remark}
\newtheorem*{Rem*}{Remark}
\newtheorem{Def}[Theor]{Definition}
\newtheorem*{Conv*}{Conventions}
\newtheorem{Conv}[Theor]{Conventions}
\newtheorem{Cor}[Theor]{Corollary}
\newtheorem{Ex}[Theor]{Example}
\newtheorem{Exs}[Theor]{Example}
\newtheorem*{Plan}{Plan of the proof}
\newtheorem*{Ackn}{Acknowledgments}
\newcommand{\Arr}{\operatorname{\sf Arr}}
\newcommand{\Tr}{\operatorname{\sf Tr}}
\newcommand{\coev}{\operatorname{\sf coev}}
\newcommand{{\Vect}}{\operatorname{\sf Vect}}
\newcommand{\ch}{\operatorname{\sf ch}}
\newcommand{\Hom}{\operatorname{\sf Hom}}
\newcommand{\Funct}{\operatorname{\sf Funct}}
\newcommand{\Mmod}{\operatorname{\sf Mod}}
\newcommand{\Cat}{\operatorname{\sf Cat}}
\newcommand{\QCoh}{\operatorname{\sf QCoh}}
\newcommand{\st}{\operatorname{\sf st}}
\newcommand{\ev}{\operatorname{\sf ev}}
\newcommand{\Sch}{\operatorname{\sf Sch}}
\newcommand{\Spec}{\operatorname{\sf Spec}}
\newcommand{\ii}{\operatorname{(\infty,1)}}
\newcommand{\mscr}{\mathscr}
\newcommand{\mcal}{\mathcal}
\newcommand{\Id}{\operatorname{\sf Id}}
\newcommand{\Ind}{\operatorname{\sf Ind}}
\newcommand{\CAlg}{\operatorname{\sf CAlg}}
\newcommand{\op}{\operatorname{\sf op}}
\newcommand{\Ll}{\operatorname{\sf L}}
\newcommand{\fcat}{\mathscr}
\definecolor{note_color}{rgb}{0.0,0.9,0.0}
\renewcommand{\phi}{\varphi}
\renewcommand{\epsilon}{\varepsilon}
\newcommand{\lra}{\longrightarrow}
\DeclareMathOperator{\GL}{GL}
\DeclareMathOperator{\tr}{tr}
\DeclareMathOperator{\Perf}{Perf}
\DeclareMathOperator{\Fun}{Fun}
\newcommand{\indlim}{\lim\limits_{\lra}}
\mathchardef\mdef="2D
\DeclareMathOperator{\dual}{fd}
\begin{document}
\title{{\bfseries Categorical proof of Holomorphic Atiyah-Bott formula}}
\date{}

\author{Grigory~Kondyrev, Artem~Prikhodko}



\maketitle
\begin{abstract}
\noindent Given a $2$-commutative diagram
$$\xymatrix{
X \ar[d]_-{\varphi} \ar[r]^-{F_X} & X \ar[d]^-{\varphi}
\\
Y \ar[r]_-{F_Y} & Y
}$$
in a symmetric monoidal $(\infty,2)$-category $\mscr{E}$ where $X,Y \in \mscr{E}$ are dualizable objects and $\varphi$ admits a right adjoint we construct a natural morphism $\xymatrix{\Tr_{\mscr{E}}(F_X) \ar[r] & \Tr_{\mscr{E}}(F_Y)}$ between the traces of $F_X$ and $F_Y$ respectively. We then apply this formalism to the case when $\mscr{E}$ is the $(\infty,2)$-category of $k$-linear presentable categories which in combination of various calculations in the setting of derived algebraic geometry gives a categorical proof of the classical Atiyah-Bott formula (also known as the Holomorphic Lefschetz fixed point formula).
\end{abstract}
\tableofcontents

\section*{Introduction}
The well-known Lefschetz fixed point theorem \cite[Formula 71.1]{Lefschetz_fixed_points} states that for a compact manifold $M$ and an endomorphism $\xymatrix{M \ar[r]^-{f} & M}$ with isolated fixed points there is an equality
\begin{align}\label{formula:best_of_the_best}
L(f):=\sum_{i=0}^{2\dim M} (-1)^i\tr(f^*_{|H^i(X, \mathbb Q)}) = \sum_{x=f(x)} \deg_x(1-f)
\end{align}
The formula $(\ref{formula:best_of_the_best})$ made huge impact on algebraic geometry in $20$th century leading Grothendieck and co-authors to development of \'etale cohomology theory and their spectacular proof of Weil's conjectures. Not only the fixed point theorem admits various generalizations, it can also be stated in different contexts than in the original work of Lefschetz. For example, if a fixed point $x$ is simple, the index $\deg_x(1-f)$ admits differential-geometric description, namely there is an equality $\deg_x(1-f) = \pm \det(1-d_xf)$. This observation has a vast generalization due to Atiyah and Bott \cite{AtiyahBott_originalI}, \cite{AtiyahBott_originalII}: in the case when all the fixed points of $f$ are simple, given an elliptic complex $E$ on $M$ and a bundle map $\xymatrix{f^{-1}E \ar[r]^-{b} & E}$ there is an equality
$$L(E, b) := \sum_i (-1)^i \tr (H^i(b)_{|H^i(M, E)}) = \sum_{x=f(x)} \mu_x$$
where $\mu_x$ are some explicit infinitesimal invariant of $E$ and $f$ at $x$ (see below for a more concrete statement). The original Lefschetz formula (\ref{formula:best_of_the_best}) can be recovered by taking $E$ to be the de Rham complex $\Omega_{M, dR}^\bullet$ with its canonical equivariant structure.

In this work we are concerned with the algebro-geometric version of the Atiyah-Bott formula. Namely let $k$ be an algebraically closed field and $X$ be a smooth proper variety over $k$ together with an endomorphism $\xymatrix{X \ar[r]^-{f} & X}$ such that its graph $\xymatrix{X \ar[r]^-{\Gamma_f} & X \times X}$ intersects the diagonal $\xymatrix{X \ar[r]^-{\Delta} & X \times X}$ transversally so that the fixed point scheme $X^f$ is a disjoint union of finitely many (simple) points. Let us call a quasi-coherent sheaf $E$ on $X$ \emph{lax-equivariant} if there is a fixed morphism $\xymatrix{f^*E \ar[r]^-{b} & E}$ (prefix "lax" corresponds to the fact that $b$ is not required to be an equivalence). In this setting Atiyah-Bott formula (also known as holomorphic Lefschetz fixed point formula) says that for a dualizable lax-equivariant sheaf $E$ there is an equality of two numbers
$$
\Ll(E, b) = \sum_{x=f(x)} \frac{\Tr_{k}(E_x \simeq E_{f(x)} \stackrel{b_x}{\longrightarrow} E_x)}{\det(1-d_xf)}
$$
where $\xymatrix{T_{X,x} \ar[r]^-{d_x f } & T_{X,x}}$ is the differential of $f$ viewed as a map from the tangent space at a point $x \in X$ to itself and $\Ll(E,b) \in k$ is the Lefschetz number 
$$\xymatrix{
\Ll(E, b):=\Tr_{k} \Big(\Gamma(X, E) \ar[r] & \Gamma(X,f_*f^*E) \simeq \Gamma(X, f^*E) \ar[r]^-{\Gamma(b)} & \Gamma(X, E)\Big)
}$$
of $b$.

Since both of the parts of the Atiyah-Bott formula are expressed in terms of traces, it is naturally to try to derive the Atiyah-Bott formula using some general mechanism which allows to compare traces of different objects. Recent development of higher category theory and derived algebraic geometry provide an appropriate context to formulate such ideas more concretely. In this paper, we provide a categorical proof of the Atiyah-Bott formula.  Namely, we interpret both sides of the equality above as morphisms in the $(\infty,1)$-category of unbounded cochain complexes $\Vect_k$ from $k \in \Vect_k$ to itself. The desired equality then follows from the naturality of a certain construction in the world of $(\infty,2)$-categories.

\paragraph*{Plan of the paper.} In the first section we introduce the main categorical tool. Namely, given a commutative up to a (not necessarily invertible) $2$-morphism diagram of the form
$$\xymatrix{
X \ar[dd]_-{\varphi} \ar[rr]^-{F_X} && X \ar[dd]^-{\varphi} \ar@2[ddll]_-{T}
\\
\\
Y \ar[rr]_-{F_Y} && Y 
}$$
 in a symmetric monoidal $(\infty,2)$-category $\mscr{E}$, where $X,Y \in \mscr E$ are dualizable and $\varphi$ admits a right adjoint, we construct a morphism of traces
$$\xymatrix{
\Tr_{\mscr{E}}(F_X) \ar[rr]^-{\Tr(\varphi,T)} && \Tr_{\mscr{E}}(F_Y)
}$$
which is compatible with vertical compositions up to homotopy (see proposition \ref{prop:functoriality_of_traces} for a precise statement).

In the second section of this paper we apply this formalism to the setting of derived algebraic geometry by considering the case $\mscr{E}=2\Cat_k$, the $(\infty,2)$-category of $k$-linear stable presentable categories and continuous functors. It is well known (see e.g. \cite{BFN}) that for a quasi-compact quasi-separated derived scheme $X$ the $\ii$-category of unbounded cochain complexes of quasi-coherent sheaves $\QCoh(X)$ on $X$ being compactly generated is a dualizable object in $2\Cat_k$ (see proposition \ref{self_dual_qcoh} for more details), so we can apply the machinery of traces. Namely, given an endomorphism $\xymatrix{X \ar[r]^-{f} & X}$ of a derived scheme $X$ the functor $f_*$ induces an endomorphism $\xymatrix{\QCoh(X) \ar[r]^-{f_*} & \QCoh(X)}$ and we calculate (\ref{prop:tr_of_f}) that the corresponding trace is simply
$$\Tr_{2\Cat_k}(f_*) \simeq \Gamma(X^f, \mathcal O_{X^f}) \in \Hom_{2\Cat_k}(\Vect_k,\Vect_k) \simeq \Vect_k$$
where $X^f$ is the derived fixed point scheme (see definition \ref{def:derived_intersection_stack}).  Now a lax-equivariant sheaf $E \in \QCoh(X)$ as in the setting of the Atiyah-Bott formula allows us to construct a diagram
$$\xymatrix{
{\Vect_k} \ar[d]_-{E} \ar[rr]^-{\Id_{{\Vect_k}}} & & {\Vect_k} \ar[d]^-{E} \ar@2[dll]_-{T}
\\
\QCoh(X) \ar[rr]_-{f_*} && \QCoh(X) 
}$$
where the $2$-morphism $T$ corresponds to the morphism $\xymatrix{f^*E \ar[r]^-{b} & E}$. As $\Tr_{2\Cat_k} (\Id_{\Vect_k}) \simeq k$, the induced map of traces 
$$\xymatrix{
k \simeq \Tr_{2\Cat_k} (\Id_{\Vect_k}) \ar[rr]^-{\Tr(\varphi,T)} && \Tr_{2\Cat_k}(f_*) \simeq \Gamma(X^f, \mathcal O_{X^f})
}$$
is just a choice of an element in $\Gamma(X^f, \mathcal O_{X^f})$. The main computation in the second section is another characterization of this element: namely, if we denote by $\xymatrix{X^f \ar[r]^-{i} & X}$ the inclusion of the derived fixed points scheme, then proposition \ref{prop:chern_in_ag} establishes an equality
$$\Tr(\varphi,T) = \Tr_{\QCoh(X^f)}\left(\xymatrix{i^* E \simeq i^* f^* E \ar[r]^-{i^*(b)} & i^* E}\right)$$
which is extremely useful in further calculations.

In the last section we apply the above categorical machinery to the particular case of the Atiyah-Bott formula. Considering the diagram
$$\xymatrix{
{\Vect_k} \ar[d]_-{E} \ar[rr]^-{\Id_{{\Vect_k}}} & & {\Vect_k} \ar[d]^-{E} \ar@2[dll]_-{T}
\\
\QCoh(X) \ar[d]_{\Gamma} \ar[rr]_-{f_*} && \QCoh(X)  \ar[d]^-{\Gamma} 
\\
{\Vect_k} \ar[rr]_-{\Id_{{\Vect_k}}} && {\Vect_k}
}$$
we obtain a commutative triangle
$$\xymatrix{
\Tr_{2\Cat_k}(\Id_{{\Vect_k}}) \ar[rr]^-{\Tr(E,T)} \ar[drr]_-{\Tr(\Gamma(X,E),\Id_{\Gamma(X,E)} \circ T) \indent \indent} && \Tr_{2\Cat_k}(f_*) \ar[d]^-{\Tr(\Gamma, \Id_{\Gamma})} .
\\
&& \Tr_{2\Cat_k}(\Id_{{\Vect_k}})
}$$
in $\Vect_k$. Since $\Tr_{2\Cat_k}(\Id_{{\Vect_k}}) \simeq k$ this gives an equality of two {\bfseries numbers}. It is then a combination of formal verifications and calculations from section $2$ that the morphisms $\Tr(\Gamma(X,E),\Id_{\Gamma(X,E)})$ and $\Tr(\Gamma,\Id_{\Gamma}) \circ \Tr(E,T)$ are precisely the left-hand and the right-hand sides of the Atiyah-Bott formula respectively.

\begin{Rem*}
After writing the paper we were pointed that results of a similar manner were obtained in the work \cite{BN_ntr}. The main difference of the present work is a useful description of the chern character in geometric terms and a more concrete description of the functional on the derived stack of fixed points which allows to identify completely the categorical version of the statement with the classical Atiyah-Bott formula.
\end{Rem*}

\medskip

\begin{Ackn} We would like to thank Dennis Gaitsgory for suggesting the problem and numerous helpful discussions. We also want to thank anonymous referee for his thorough review and many useful suggestions.

The first author was supported by the Russian Academic Excellence Project '5-100'. The second author is partially supported by Laboratory of Mirror Symmetry NRUHSE, RF Government grant, ag. № 14.641.31.0001.
\end{Ackn}

\medskip
\begin{Conv*}\
\\
1) All the categories we work with are assumed to be $\ii$-categories. For an $\ii$-category $\mscr{C}$ we will denote by $(\mscr{C})^{\simeq}$ the underlying $\infty$-groupoid of $\mscr{C}$ obtained by discarding all the non-invertible morphisms from $\mscr{C}$. Analogously for an $(\infty,2)$-category $\mathscr E$ we will denote by $\mathscr E^{1 \mdef \sf cat}$ the maximal $\ii$-subcategory of $\mathscr E$ obtained by discarding all the non-invertible $2$-morphisms. For a symmetric monoidal category $\mathscr C$ we will denote the full subcategory of dualizable objects by $\mathscr C^{\dual}$.
\\
\\
2) We will denote by $\mcal{S}$ the symmetric monoidal $\ii$-category of spaces. For a field $k$ we will denote by ${\Vect_k}$ the stable symmetric monoidal $\ii$-category of unbounded cochain complexes over $k$ up to quasi-isomorphism with the canonical $(\infty,1)$-enhancement. We will also denote by $\Vect_k^{\heartsuit}$ the ordinary category of $k$-vector spaces considered as an $\ii$-category.
\\
\\
3) We will denote by $\Pr^{\Ll}_{\infty}$ the $\ii$-category of presentable $\ii$-categories and continuous functors with a symmetric monoidal structure from \cite[Proposition 4.8.1.14.]{HA}. Similarly, we will denote by $\Pr^{\Ll,\st}_{\infty}$ the $\ii$-category of stable presentable $\ii$-categories and continuous functors considered as a symmetric monoidal $\ii$-category with the monoidal structure inherited from $\Pr^{\Ll}_{\infty}$.
\\
\\
4) Notice that $\Vect_k$ is a commutative algebra object in $\Pr^{\Ll,\st}_{\infty}$. By \cite[Theorem 4.5.2.1.]{HA} it follows that the presentable stable $\ii$-category of $k$-linear presentable $\ii$-categories and $k$-linear functors $\Cat_k:=\Mmod_{\Vect_k}(\Pr^{\Ll,\st}_{\infty})$ admits natural symmetric monoidal structure. We will also denote by $2\Cat_k$, the symmetric monoidal $(\infty,2)$-category of $k$-linear presentable $\ii$-categories and continuous $k$-linear functors so that the underlying $\ii$-category of $2\Cat_k$ is precisely $\Cat_k$.
\end{Conv*}

\section{Dualizable objects and traces}

\subsection{Traces in symmetric monoidal $\ii$-categories}
We start with the following well-known

\begin{Def}\label{def:trace}
Let $\xymatrix{X \ar[r]^-{f} & X}$ be a morphism in a symmetric monoidal $\ii$-category $\mscr{C}$ with $X \in \mscr{C}$ being dualizable. Define then the {\bfseries trace of $f$} denoted by $\Tr_{\mscr{C}}(f) \in\ \Hom_{\mscr{C}}(I,I)$ as the composite
$$\xymatrix{
I \ar[rr]^-{\coev} && X \otimes X^\vee \ar[rr]^-{f \otimes \Id_{X^\vee}} && X \otimes X^\vee \ar[rr]^-{{\sf Twist}}_-{\sim} && X^\vee \otimes X \ar[rr]^-{\ev} && I
}$$
where $I$ is the monoidal unit in $\mscr{C}$.
In particular, for any dualizable object $X \in \mscr{C}$ we define the {\bfseries dimension of $X$} denoted by $\mcal{X}_{\mscr{C}}(X) \in \Hom_{\mscr{C}}(I,I)$ simply as the trace of the identity map $\mcal{X}_{\mscr{C}}(X):=\Tr_{\mscr{C}}(\Id_X)$.
\end{Def}

\begin{Rem}
Notice that trace is cyclic: given two morphisms $\xymatrix{X \ar[r]^-{f} & Y}$  and $\xymatrix{Y \ar[r]^-{g} & X}$ where $X$ and $Y$ are both dualizable there is a canonical equivalence $\Tr_{\mscr{C}}(f \circ g) \simeq \Tr_{\mscr{C}}(g \circ f)$
\end{Rem}

\begin{Exs}\ 
\\
1) Notice that an object $V \in \Vect_k$ is dualizable if an only if it has finite-dimensional cohomology spaces nonzero only in finitely many degrees. If $V \in \Vect_k$ is dualizable and $f \in \Hom_{\Vect_k}(V,V)$ is some morphism then the trace $\Tr_{\Vect_k}(f) \in \Hom_{\Vect_k}(k,k) \simeq k$ is the alternating sum of the ranks of the map on the cohomology spaces of $V$ induced by $f$. In particular, in case $f=\Id_V$ we see that $\Tr_{\Vect_k}(\Id_V)=\mcal{X}_{\Vect_k}(V) \in k$ is simply the Euler characteristic of $V$.
\\
\\
2) Let $\mscr{D} \in \Cat_k$ be dualizable. Recall that the monoidal unit of $\Cat_k$ is the $\ii$-category $\Vect_k \in \Cat_k$ and, moreover, we have 
$$\Hom_{\Cat_k}(\Vect_k,\Vect_k)=\Hom_{\Mmod_{\Vect_k}(\Pr^{\Ll,\st}_{\infty})}(\Vect_k,\Vect_k) \simeq (\Vect_k)^{\simeq}.
$$
The cochain complex $\mcal{X}_{\Cat_k}(\mscr{D}) \in (\Vect_k)^{\simeq}$ is frequently called {\bfseries Hochschild homology of $\mscr{D}$}.
\end{Exs}

\subsection{Traces in symmetric monoidal $(\infty,2)$-categories}
Let $\mscr{E}$ be a symmetric monoidal $(\infty,2)$-category (that is, a commutative algebra object in the $\ii$-category of $(\infty,2)$-categories, see \cite[Chapter V.3, 1.4.1.]{GaitsRoz}) and $X,Y \in \mscr{E}$ be two dualizable objects in $\mscr{E}$. Suppose we are given a (not necessary commutative) diagram
$$\xymatrix{
X \ar@/_/[dd]_-{\varphi} \ar[rr]^-{F_X} && X \ar@/_/[dd]_-{\varphi} \ar@2[ddll]_-{T}
\\
\\
Y \ar@/_/[uu]_-{\psi} \ar[rr]_-{F_Y} && Y \ar@/_/[uu]_-{\psi}
}$$
in $\mscr{E}$, where $\varphi$ is left adjoint to $\psi$ and
$$\xymatrix{ \varphi \circ F_X \ar[rr]^-{T} && F_Y \circ \varphi}$$
is a $2$-morphism in $\mscr{E}$. We argue that then there is a natural morphism 
$$\xymatrix{
\Tr_{\mscr{E}}(F_X) \ar[rr]^-{\Tr(\varphi,T)} && \Tr_{\mscr{E}}(F_Y)
}$$
in the $\ii$-category $\Hom_{\mscr{E}}(I,I)$ which is very useful in various applications. Our plan for this section is to define the morphism $\Tr(\varphi,T)$ as the trace in a certain $\ii$-category of arrows. We begin with the following

\begin{Def}\label{def:category_of_arrows}
Let $\mscr{E}$ be an $(\infty,2)$-category. We define {\bfseries an $\ii$-category of arrows $\Arr(\mscr{E})$} in $\mscr{E}$ as the maximal $\ii$-subcategory of the $(\infty,2)$-category of unital left-lax functors from $[1]$ to $\mathscr E$
$$\Arr(\mathscr E) := \Fun([1], \mathscr E)^{1 \mdef \sf cat}_{{\sf left \mdef lax}}$$
We refer the reader to \cite[Chapter  A.1, 3.1]{GaitsRoz} for a discussion of lax-functors between $(\infty,2)$-categories.
\end{Def}
\medskip 

In particular, we see that by definition the space of objects of $\Arr(\mathscr E)$ is the space of arrows in $\mscr{E}$, that is, $(\Arr(\mscr{E}))^{\simeq}:=(\Funct(\Delta^1,\mscr{E}))^{\simeq}$. Moreover, given two objects $\xymatrix{X \ar[r]^-{f} & Y}$ and $\xymatrix{Z \ar[r]^-{g} & W}$ in $\Arr(\mscr{E})$ the space of morphisms between them is the space of diagrams
$$\xymatrix{
X \ar[dd]_-{f} \ar[rr]^-{p} && Z \ar[dd]^-{g} \ar@2[ddll]_-{T}
\\
\\
Y \ar[rr]_-{q} && W
}$$
where $\xymatrix{g \circ p \ar[r]^-{T} & q \circ f}$ is a chosen $2$-morphism in $\mscr{E}$.

\begin{Ex}\label{Ex:end_in_arr}
By the construction of $\Arr(\mscr{E})$ we see that for an object $X \in \mscr{E}$ the space
$$\xymatrix{\Hom_{\Arr(\mscr{E})}(X \ar[r]^-{\Id_X} & X,X \ar[r]^-{\Id_X} & X)}$$ consists of triples $(f,g,T)$, where $f,g \in \Hom_{\mscr{E}}(X,X)$ and $T \in \Hom_{\Hom_{\mscr{E}}(X,X)}(f,g)$.
\end{Ex}

Note that the canonical inclusions $\xymatrix{[0] \ar@<-.5ex>[r] \ar@<.5ex>[r] & [1]}$ induce two functor $\xymatrix{\Arr(\mathscr E) \ar@<-.5ex>[r]_-{t_{\mathscr E}} \ar@<.5ex>[r]^-{s_{\mathscr E}} & \mathscr E^{\sf 1 \mdef cat}}$ which on the level of objects send an arrow $\xymatrix{X \ar[r] & Y}$ to $X$ and $Y$ respectively. This is important due to the following

\begin{Prop}\label{rem:dualizable_in_arrows}
Let $\mscr{E}$ be a symmetric monoidal $(\infty,2)$-category. Then
\begin{enumerate}
\item The $\ii$-category $\Arr(\mscr{E})$ admits symmetric monoidal structure such that functors $\xymatrix{\Arr(\mathscr E) \ar@<-.5ex>[r]_-{t_{\mathscr E}} \ar@<.5ex>[r]^-{s_{\mathscr E}} & \mathscr E^{\sf 1 \mdef cat}}$ are both symmetric monoidal.

\item An object $\xymatrix{X \ar[r]^-{\varphi} & Y \in \Arr(\mscr{E})}$ is dualizable if and only if $X$ and $Y$ are dualizable objects of $\mscr{E}$ and the morphism $\varphi$ admits a right adjoint $\psi$. 
\end{enumerate}

\begin{proof}
\begin{enumerate}[leftmargin=0pt,itemindent=*]
\item Recall first that by definition \cite[Chapter  A.1, 3.1.3.]{GaitsRoz} for any $(\infty,2)$-category $\mathscr F$ we have $\Fun(\mathscr F, \mathscr E)_{\sf left \mdef lax} = \Fun(\mathscr F^{\sf 2 \mdef op}, \mathscr E^{\sf 2\mdef op})_{\sf right \mdef lax}$.  Now since $\Fun(\mathscr F,-)_{\sf right \mdef lax}$ is right adjoint to the Gray product functor (see \cite[Chapter A.1, 3.2.7]{GaitsRoz}) it preserves limits. Since the functor $(-)^{\sf 2\mdef op}$ is an autoequivalence and the functor $(-)^{\sf 1\mdef cat}$ is right adjoint, the composite $\Arr(-) = \Fun([1], -^{\sf 2\mdef op})_{\sf right \mdef lax}^{\sf 1\mdef cat}$ also preserves limits and in particular it preserves finite products. It follows by \cite[Corollary 2.4.1.8]{HA} the functor $\Arr(-)$ can be promoted to a symmetric monoidal functor with respect to the cartesian monoidal structure. Now since by the assumption $\mathscr E$ is a commutative monoid in the $(\infty,1)$-category of $(\infty,2)$-categories and symmetric monoidal functors preserve commutative monoids, we obtain that $\Arr(\mathscr E)$ is a commutative monoid in $\Cat_{\infty}$, i.e. a symmetric monoidal category.

For the second part, notice that the canonical inclusions $\xymatrix{[0] \ar@<-.5ex>[r] \ar@<.5ex>[r] & [1]}$ induce natural transformations of functors $\xymatrix{\Arr(-) \ar@<-.5ex>[r]_-{t} \ar@<.5ex>[r]^-{s} & (-)^{\sf 1\mdef cat}}$. Since by \cite[Corollary 2.4.1.8]{HA} the space of symmetric monoidal natural transformations between symmetric monoidal functors with respect to cartesian monoidal structure is equivalent to the space of ordinary natural transformations, we see that $s$ and $t$ are symmetric monoidal transformations. In particular, it follows that the corresponding morphisms $\xymatrix{\Arr(\mathscr E) \ar@<-.5ex>[r]_-{t_{\mathscr E}} \ar@<.5ex>[r]^-{s_{\mathscr E}} & \mathscr E^{\sf 1 \mdef cat}}$ are maps of commutative monoids in $\Cat_{\infty}$, i.e. are symmetric monoidal functors.

\item Note first that if $X$ and $Y$ are dualizable objects, then given any morphism $\xymatrix{Y \ar[r]^-{\psi} & X}$ the morphism
$$\xymatrix{
(\Id_Y \otimes \psi^{\vee}) \circ \ev_Y: Y \otimes X^{\vee} \ar[r]^-{\coev_Y} & Y \otimes X^{\vee} \otimes Y \otimes Y^{\vee} \ar[r]^-{\psi} & Y \otimes X^{\vee} \otimes X \otimes Y^{\vee} \ar[r]^-{\ev_X} & Y \otimes Y^{\vee} \ar[r]^-{\ev_Y} & I
}$$
can be rewritten as
$$\xymatrix{
Y \otimes X^{\vee} \ar[r]^-{\coev_Y} & Y \otimes X^{\vee} \otimes Y \otimes Y^{\vee} \simeq Y \otimes Y^{\vee} \otimes X^{\vee} \otimes Y \ar[r]^-{\ev_Y} & X^{\vee} \otimes Y \ar[r]^-{\psi} & X^{\vee} \otimes X \ar[r]^-{\ev_X} & I
}$$
and so by triangle identities we see that the square
$$\xymatrix{
Y \otimes X^{\vee} \ar[rr]^-{\psi \otimes \Id_{X^{\vee}}} \ar[d]_-{\Id_Y \otimes \psi^{\vee}} && X \otimes X^{\vee} \ar[d]^-{\ev_X}
\\
Y \otimes Y^{\vee} \ar[rr]_-{\ev_Y} && I
}$$
commutes. In particular, for any morphism $\xymatrix{X \ar[r]^-{\varphi} & Y}$ we get an equivalence
$$
\ev_X \circ ((\psi \circ \varphi) \otimes \Id_{X^{\vee}}) \simeq \ev_X \circ(\psi \otimes \Id_{X^{\vee}}) \circ (\varphi \otimes \Id_{X^{\vee}}) \simeq \ev_Y \circ (\Id_Y \otimes \psi^{\vee}) \circ (\varphi \otimes \Id_{X^{\vee}}) \simeq \ev_Y \circ (\varphi \otimes \psi^\vee)
$$
in $\Hom_{\mscr{E}}(X \otimes X^{\vee},I)$. In a similar manner there is an equivalence
$$
(\varphi \otimes \psi^\vee) \circ \coev_X \simeq  ((\varphi \circ \psi) \otimes \Id_{Y^{\vee}})
$$
in $\Hom_{\mscr{E}}(I,Y \otimes Y^{\vee})$. 
\\
We now argue that if $X$ and $Y$ are dualizable and $\varphi$ is left adjoint to $\psi$, then dual object to the object $\xymatrix{X \ar[r]^-{\varphi} & Y \in \Arr(\mscr{E})}$ is simply $\xymatrix{X^\vee \ar[r]^-{\psi^\vee} & Y^\vee \in \Arr(\mscr{E})}$: define evaluation morphism as
$$\xymatrix{
X \otimes X^\vee \ar[dd]_-{\varphi \otimes \psi^\vee} \ar[rr]^-{\ev_X} && I \ar[dd]^-{\Id_I} \ar@2[ddll]_-{T_1}
\\
\\
Y \otimes Y^\vee  \ar[rr]_-{\ev_Y} && I
}$$
where $T_1$ is 
$$\xymatrix{
\ev_X \ar[rr] && \ev_X \circ ((\psi \circ \varphi) \otimes \Id_{X^{\vee}}) \simeq \ev_Y \circ (\varphi \otimes \psi^\vee)
}$$
induced by the unit of the adjunction between $\varphi$ and $\psi$ and the coevaluation morphism as
$$\xymatrix{
I \ar[dd]_-{\Id_I} \ar[rr]^-{\coev_X} && X \otimes X^\vee  \ar[dd]^-{\varphi \otimes \psi^\vee} \ar@2[ddll]_-{T_2}
\\
\\
I \ar[rr]_-{\coev_Y} && Y \otimes Y^\vee
}$$
where $T_2$ is $$\xymatrix{
(\varphi \otimes \psi^\vee) \circ \coev_X \simeq ((\varphi \circ \psi) \otimes \Id_{Y^{\vee}}) \circ \coev_Y \ar[rr] && \coev_Y
}$$
induced by the counit of the adjunction between $\varphi$ and $\psi$. It is straightforward to check that such morphisms satisfy triangle identities.
\\
Conversely, if an object $\xymatrix{X \ar[r]^-{\varphi} & Y \in \Arr(\mscr{E})}$ is dualizable then since the monoidal structure on $\Arr(\mscr{E})$ is defined pointwise its dual has to have a form $\xymatrix{X^\vee \ar[r]^-{\psi^\vee} & Y^\vee}$, where $\psi^{\vee}$ is some morphism. Now to see that $\psi$ is right adjoint to $\varphi$ we notice that the evaluation diagram gives a morphism $\xymatrix{\Id_X \ar[r]^-{T_1} & \psi \circ \varphi}$ and the coevaluation diagram gives a morphism $\xymatrix{\varphi \circ \psi \ar[r]^-{T_2} & \Id_Y}$. The classical conditions on evaluation and coevaluation morphisms are then precisely the triangle identities on $T_1$ and $T_2$.
\end{enumerate}
\end{proof}
\end{Prop}

\begin{Cor} \label{rem:dual_arrows_in2cat} We see that in the case when $\mscr{E}=2\Cat_k$ an arrow $\xymatrix{(\mscr{C} \ar[r]^-{F} & \mscr{D}) \in \Arr(2\Cat_k)}$ is a dualizable object iff both $\mscr{C},\mscr{D} \in \Cat_k$ are dualizable and the functor $F$ admits a right adjoint in $\Cat_k$, i.e. the functor $F$ admits a \textit{continuous} right adjoint as a plain functor.
\end{Cor}
 
 \begin{Ex} \label{rem:trace_concretely}
  Let $\mscr{E}$ be a symmetric monoidal $(\infty,2)$-category and $\xymatrix{X \ar[r]^-{\psi} & Y \in \Arr(\mscr{E})}$ be a dualizable object. Then for a morphism 
 $$\xymatrix{(F_X,F_Y,T) \in \Hom_{\Arr(\mscr{E})}(X \ar[r]^-{\varphi} & Y,X \ar[r]^-{\varphi} & Y)}$$ 
 in $\Arr(\mscr{E})$ which we imagine as a (not necessary commutative) diagram
$$\xymatrix{
X \ar[dd]_-{\varphi} \ar[rr]^-{F_X} && X \ar[dd]^-{\varphi} \ar@2[ddll]_-{T}
\\
\\
Y \ \ar[rr]_-{F_Y} && Y 
}$$
in $\mscr{E}$, where $\xymatrix{ \varphi \circ F_X \ar[rr]^-{T} && F_Y \circ \varphi}$ is a $2$-morphism the trace 
$$\xymatrix{\Tr_{\Arr(\mscr{E})}(F_X,F_Y,T) \in \Hom_{\Arr(\mscr{E})}(I \ar[r]^-{\Id_I} & I,I \ar[r]^-{\Id_I} & I)}$$
 is given by the big rectangle in the $2$-commutative diagram
$$\xymatrix{
I \ar[dd]_-{\Id_I} \ar[rr]^-{\coev_X} && X \otimes X^\vee \ar@2[ddll]  \ar[rr]^-{F_X \otimes \Id_{X^\vee}} \ar[dd]_-{\varphi \otimes \psi^\vee} && X \otimes X^\vee \ar[rr]^-{\ev_X} \ar[dd]^-{\varphi \otimes \psi^\vee} \ar@2[ddll] && I \ar[dd]^-{\Id_I} \ar@2[ddll] 
\\
\\
I \ar[rr]_-{\coev_Y} && Y \otimes Y^\vee \ar[rr]_-{F_Y \otimes \Id_{Y^\vee}} && Y \otimes Y^\vee \ar[rr]_-{\ev_Y} && I
}$$
where
\\
1) The $2$-morphism in the middle square is $T \otimes \psi^{\vee}$.
\\
2) The $2$-morphism in the left square is
$$\xymatrix{
(\varphi \otimes \psi^\vee) \circ \coev_X \simeq ((\varphi \circ \psi) \otimes \Id_{Y^{\vee}}) \circ \coev_Y \ar[rr] && \coev_Y
}$$
induced by the counit of the adjunction between $\varphi$ and $\psi$.
\\
3) The $2$-morphism in the right square is
$$\xymatrix{
\ev_X \ar[rr] && \ev_X \circ ((\psi \circ \varphi) \otimes \Id_{X^{\vee}}) \simeq \ev_Y \circ (\varphi \otimes \psi^\vee)
}$$
induced by the unit of the adjunction between $\varphi$ and $\psi$.
\\
Since the top row is simply $\Tr_{\mscr{E}}(F_X)$ and the bottom row is simply $\Tr_{\mscr{E}}(F_Y)$ we see that it makes sense to think of the trace $\Tr_{\Arr(\mscr{E})}(F_X,F_Y,T)$ as of a morphism from $\Tr_{\mscr{E}}(F_X)$ to $\Tr_{\mscr{E}}(F_Y)$ in the $\ii$-category $\Hom_{\mscr{E}}(I,I)$.
 \end{Ex}
 
 The example above motivates the following
 
\begin{Def}\label{def:map_of_traces}
  Let $\mscr{E}$ be a symmetric monoidal $(\infty,2)$-category and $\xymatrix{X \ar[r]^-{\varphi} & Y \in \Arr(\mscr{E})}$ be a dualizable object. Define then a {\bfseries morphism of traces $\Tr(\varphi,T)$ induced by $T$} 
$$\xymatrix{
\Tr_{\mscr{E}}(F_X) \ar[rr]^-{\Tr(\varphi,T)} &&  \Tr_{\mscr{E}}(F_Y) 
}$$
as the morphism from $\Tr_{\mscr{E}}(F_X)$ to $\Tr_{\mscr{E}}(F_Y)$ in the $\ii$-category $\Hom_{\mscr{E}}(I,I)$ defined by the trace $\Tr_{\Arr(\mscr{E})}(F_X,F_Y,T)$.
\end{Def}

In order to proceed further recall the following

\begin{Def}
Let $\mathscr C$ be an $\ii$-category. We will call an an object $X \in \mathscr C$ \emph{compact} if for any filtered diagram $\{Y_i\}_{i\in I}$ in $\mathscr C$ with colimit $Y$ the canonical map
$$\indlim \Hom_{\mathscr C}(X, Y_i) \to \Hom_{\mathscr C}(X, Y)$$
is an equivalence. Note that if $\mathscr C$ is stable one can equivalently ask $\Hom_{\mathscr C}(X, -)$ to commute with all colimits since $\Hom_{\fcat C}(X,-)$ always commutes with finite colimits and every colimit diagram is a filtered colimit of its finite subdiagrams.
\end{Def}

Here is an example relevant to this paper

\begin{Ex}
We claim that the $\ii$-category of morphisms $\xymatrix{\Vect_k \ar[r]^-{\varphi} & Y}$ in $2\Cat_k$ which admit a right adjoint $\psi$ in $2\Cat_k$ is equivalent to the full $\ii$-subcategory $Y^{\sf comp} \subseteq Y$ of $Y$ spanned by compact objects. Indeed, due to the equivalence
$$\Fun_{\Cat_k}(\Vect_k, Y) \simeq Y \quad \xymatrix{F \ar@{|->}[r]& F(k)}$$
it is enough to prove that $F(k)$ is compact if and only if $F$ admits a continuous right adjoint. But $\Hom_Y(F(k), -)$ is always a (not necessary continuous) right adjoint of $F$ and since $Y$ is stable $\Hom_Y(F(k), -)$ is continuous if and only if $F(k)$ is compact.
\end{Ex}

\begin{Def} \label{def:chern} 
Consider the case when $\mscr{E}=2\Cat_k$, $X=\Vect_k$ and $F_X=\Id_{\Vect_k}$ so that we have a diagram
$$\xymatrix{
\Vect_k \ar@/_/[dd]_-{\varphi} \ar[rr]^-{\Id_{\Vect_k}} && \Vect_k \ar@/_/[dd]_-{\varphi} \ar@2[ddll]_-{T}
\\
\\
Y \ar@/_/[uu]_-{\psi} \ar[rr]_-{F_Y} && Y \ar@/_/[uu]_-{\psi}.
}$$
Since by the previous example $\phi$ just classifies some compact object of $Y$ we see that the $2$-morphism $T$ simply corresponds to some morphism $t \in \Hom_{Y}(E,F_Y(E))$, where $E:=\varphi(k)$.
\\
In particular, we get an element in $\Tr_{2\Cat_k}(F_Y)$ which corresponds to the morphism
$$\xymatrix{
k \simeq \Tr_{2\Cat_k}(\Id_{{\Vect_k}}) \ar[rr]^-{\Tr(\varphi,T)} && \Tr_{2\Cat_k}(F_Y) \in \Hom_{2\Cat_k}(\Vect_k,\Vect_k) \simeq \Vect_k
}$$
called the {\bfseries Chern character of $E$} which will be further denoted by $\ch(E,t) \in \Tr_{2\Cat_k}(F_Y)$.
\end{Def}

\begin{Ex}\label{ex:chern_for_vect}
Consider the case when $\mscr{E}=2\Cat_k$, $X=Y=\Vect_k$ and $F_X=F_Y=\Id_{\Vect_k}$.  Set $V:=\varphi(k)$ so that $\varphi(-) \simeq V \otimes -$ and $\psi(-) \simeq V^\vee \otimes -$. We then have a diagram
$$\xymatrix{
\Vect_k \ar@/_/[dd]_-{V} \ar[rr]^-{\Id_{\Vect_k}} && \Vect_k  \ar@/_/[dd]_-{V} \ar@2[ddll]_-{T}
\\
\\
\Vect_k \ar@/_/[uu]_-{V^\vee} \ar[rr]_-{\Id_{\Vect_k}} && \Vect_k  \ar@/_/[uu]_-{V^\vee}
}$$
so that $T$ simply corresponds to some morphism $t \in \Hom_{\Vect_k}(V,V)$. Then directly by the definition \ref{def:map_of_traces} we get an equality
$$
\ch(V,t) = \Tr_{\Vect_k}(t)
$$
of two numbers.
\end{Ex}

\begin{Prop} \label{prop:functoriality_of_traces}
Let $\mscr{E}$ be a symmetric monoidal $(\infty,2)$-category and $X,Y,Z \in \mscr{E}$ be dualizable objects.  Suppose we are given a diagram 
$$\xymatrix{
X \ar@/_/[dd]_-{\varphi_1} \ar[rr]^-{F_X} && X \ar@/_/[dd]_-{\varphi_1} \ar@2[ddll]_-{T_1}
\\
\\
Y \ar@/_/[uu]_-{\psi_1} \ar@/_/[dd]_-{\varphi_2} \ar[rr]_-{F_Y} && Y \ar@/_/[uu]_-{\psi_1} \ar@2[ddll]_-{T_2} \ar@/_/[dd]_-{\varphi_2}
\\
\\
Z \ar@/_/[uu]_-{\psi_2}  \ar[rr]_-{F_Z} && Z \ar@/_/[uu]_-{\psi_2}
}$$
in $\mscr{E}$, where $\varphi_1$ is left adjoint to $\psi_1$, $\varphi_2$ is left adjoint to $\psi_2$ and
$$\xymatrix{ \varphi_1 \circ F_X \ar[rr]^-{T_1} && F_Y \circ \varphi_1}$$
$$\xymatrix{ \varphi_2 \circ F_Y \ar[rr]^-{T_2} && F_Z \circ \varphi_2}$$
are $2$-morphisms. Then there is an equivalence
$$
\Tr(\varphi_2 \circ \varphi_1, T_2 \circ_v T_1) \simeq \Tr(\varphi_2, T_2) \circ \Tr(\varphi_1,T_1)
$$
where $\circ_v$ is the vertical composition of $2$-morphisms.

\begin{proof}
Using the explicit description of the morphism of traces from \ref{rem:trace_concretely} the claim follows from the fact that given a diagram
$$\xymatrix{X  \rrtwocell^{f}_{f'}{_{\alpha}} && Y \rrtwocell^{g}_{g'}{_{\beta}} && Z }$$
in any $(\infty,2)$-category the diagram
$$\xymatrix{
g \circ f \ar[d]_-{\Id_g \circ \alpha} \ar[rr]^-{\beta \circ \Id_{f}} && g' \circ f \ar[d]^-{\Id_{g'} \circ \alpha}
\\
g \circ f' \ar[rr]_-{\beta \circ \Id_{f'}} && g' \circ f'
}$$
commutes.
\end{proof}
\end{Prop}

\section{Traces in algebraic geometry}
\begin{Conv}\
We will further work in the setting of derived algebraic geometry over some fixed field $k$. For a derived stack $X$ over a field $k$ we will denote the $k$-linear symmetric monoidal $\ii$-category of unbounded complexes of quasi-coherent sheaves on $X$ by $\QCoh(X) \in \CAlg(\Cat_k)$. Similarly, all the functors are assumed to be derived in the appropriate sense. We refer the reader to \cite{GaitsRoz} for an introduction to the basic concepts of derived algebraic geometry using the functor of points approach.
\end{Conv}

\subsection{Duality for Quasi-Coherent sheaves}
In this section we quickly review basics statements about quasi-coherent sheaves on perfect stacks (see \cite[Definition 3.2]{BFN} for the definition of a perfect stack). In particular, recall from \cite[Proposition 3.19]{BFN} that any quasi-compact derived scheme with affine diagonal is perfect. 

\begin{Prop}\label{self_dual_qcoh}
Let $X$ a perfect derived stack. Then $\QCoh(X) \in \Cat_k$ is self-dual.

\begin{proof}
Recall that by \cite[Chapter I.1, Proposition 7.3.2]{GaitsRoz} for a small stable category $\mathscr C_0$ we have an equivalence $\Ind(\mathscr C_0)^\vee \simeq \Ind(\mathscr C_0^{\op})$. Now since $X$ is perfect, we have $\QCoh(X)\simeq \Ind \Perf(X)$. Moreover, by \cite[Proposition 3.6]{BFN} the full subcategory $\QCoh(X)^{\dual}$ of dualizable objects in $\QCoh(X)$ coincides with $\Perf(X)$ so we get an equivalence $\xymatrix{\Perf(X)^{\op} \ar[r]^-{(-)^{\vee}}_-{\sim} & \Perf(X)}$ given by dualization and the result follows. 
\end{proof}
\end{Prop}

In what follows we will need a concrete description of self-duality of $\QCoh(X)$ for $X$ as above. Recall 

\begin{Theor}[{\cite[Theorem 1.2]{BFN}}]
For two perfect derived stacks $X, Y$ functor
$$\xymatrix{
\QCoh(X) \otimes \QCoh(Y) \ar[rr]^-{\boxtimes} && \QCoh(X \times Y) 
}$$ 
is an equivalence in $\Cat_k$.
\end{Theor}

In particular, by considering the diagram 
$$\xymatrix{
\ast && \ar[ll]_-{p} X \ar[rr]^-{\Delta} && X \times X.
}$$
we can define the evaluation map
$$\xymatrix{
\QCoh(X) \otimes \QCoh(X) \simeq \QCoh(X \times X) \ar[rr]^-{\sf ev_{\QCoh(X)}} && \Vect_k
}$$
simply as
$$
{\sf ev}:=p_* \circ \Delta^*
$$
and the coevaluation map
$$\xymatrix{
\Vect_k \ar[rr]^-{\sf coev_{\QCoh(X)}} && \QCoh(X \times X) \simeq \QCoh(X) \otimes \QCoh(X)
}$$
as
$$
{\sf coev}:=\Delta_* \circ p^*.
$$
One can check that the morphisms above realize $\QCoh(X)$ as a self-dual object. See \cite[Corollary 4.8]{BFN} for a bit more general statement.

\begin{Rem}
More generally, recall that by \cite[Chapter II.2, 5.3.2]{GaitsRoz} the quasi-coherent sheaves functor can be lifted to a symmetric monoidal functor
$$\xymatrix{
{\sf Corr}(\Sch)^{\sf all}_{{\sf all},{\sf all}} \ar[rr] && (2\Cat_k)^{2-\op}
}$$
where ${\sf Corr}(\Sch)^{\sf all}_{{\sf all},{\sf all}}$ is a symmetric monoidal $(\infty,2)$-category whose objects are derived schemes in the sense of  \cite[Chapter I.2, 3.1.1]{GaitsRoz}, morphisms from $X$ to $Y$ are spans
$$\xymatrix{
X & W \ar[r] \ar[l] & Y
}$$
with the composition given by pullbacks, $2$-morphisms are commutative diagrams 
$$\xymatrix{
& W_1 \ar[dr] \ar[dl] \ar[dd]^-{h}
\\
X && Y
\\
& W_2 \ar[ur] \ar[ul]
}$$
and the monoidal structure is given by cartesian product. Informally speaking, the extension of the quasi-coherent sheaves to the category of correspondences is given by mapping the span $\xymatrix{X & W \ar[r]^-{t} \ar[l]_-{s} & Y}$ to the morphism $\xymatrix{\QCoh(X) \ar[r]^-{s^*} & \QCoh(W) \ar[r]^-{t_*} & \QCoh(Y)}$. 
\\
In particular, the fact that $\QCoh(X) \in \Cat_k$ is self dual via the morphisms as in the discussion above can be derived from the fact that the spans
$$\xymatrix{
\ast && \ar[ll]_-{p} X \ar[rr]^-{\Delta} && X \times X
}$$
and
$$\xymatrix{
X \times X && \ar[ll]_-{\Delta} X \ar[rr]^-{p} && \ast
}$$
realize $X \in {\sf Corr}(\Sch)^{\sf all}_{{\sf all},{\sf all}}$ as a self-dual object. We refer to \cite[Chapter V.1, Chapter II.2]{GaitsRoz} for a thorough discussion of the category of correspondences. 
\end{Rem}

In particular, by the discussion above we get the following
\begin{Cor}
For two perfect derived stacks $X$ and $Y$ there is an equivalence
$$
\Hom_{\Cat_k}(\QCoh(X),\QCoh(Y)) \simeq \Hom_{\Cat_k}(\Vect_k, \QCoh(X) \otimes \QCoh(Y)) \simeq
$$
$$
\simeq (\QCoh(X) \otimes \QCoh(Y))^{\simeq} \simeq \QCoh(X \times Y)^{\simeq}.
$$
Concretely, for a sheaf $\mcal{K} \in \QCoh(X \times Y)$ the corresponding functor from $\QCoh(X)$ to $\QCoh(Y)$ is
$$
{q_2}_* (\mcal{K} \otimes (q_1^* -)) \in \Hom_{\Cat_k}(\QCoh(X),\QCoh(Y))$$
where 
$$\xymatrix{
X && X \times Y \ar[ll]_-{q_1} \ar[rr]^-{q_2} && Y
}$$
are the projection maps. The sheaf $\mathcal K$ is frequently called the {\bfseries kernel} of the corresponding functor.
\end{Cor}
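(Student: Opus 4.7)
The proof is essentially a diagram-chasing application of self-duality, so my plan is to unpack the three displayed equivalences in order and then trace through the formula for the concrete description.

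First, I would obtain the equivalence $\Hom_{\Cat_k}(\QCoh(X),\QCoh(Y)) \simeq \Hom_{\Cat_k}(\Vect_k, \QCoh(X) \otimes \QCoh(Y))$ as a formal consequence of the fact that $\QCoh(X)$ is self-dual in $\Cat_k$, witnessed by the evaluation and coevaluation $p_* \circ \Delta^*$ and $\Delta_* \circ p^*$ defined just above. Indeed, for any dualizable object $A$ in a symmetric monoidal $\ii$-category, the standard adjunction-by-duality argument gives $\Hom(A,B) \simeq \Hom(I, A^\vee \otimes B)$ with the equivalence sending $f \colon A \to B$ to $(\Id_{A^\vee} \otimes f) \circ \coev_A$ and, conversely, sending $g \colon I \to A^\vee \otimes B$ to $(\ev_A \otimes \Id_B) \circ (\Id_A \otimes g)$, up to the symmetry isomorphism. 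Applied to $A = \QCoh(X)$ with $A^\vee = \QCoh(X)$, this gives the first equivalence.

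Next, I would identify $\Hom_{\Cat_k}(\Vect_k, \mscr{C}) \simeq \mscr{C}^{\simeq}$ for any $\mscr{C} \in \Cat_k$. This is exactly the statement that $\Vect_k$ is the monoidal unit of $\Cat_k = \Mmod_{\Vect_k}(\Pr^{\Ll,\st}_{\infty})$, since a $k$-linear colimit-preserving functor out of $\Vect_k$ is determined, up to contractible choice, by the image of $k$. Applying this to $\mscr{C} = \QCoh(X) \otimes \QCoh(Y)$ gives the second equivalence, and the third equivalence is then the BFN theorem recalled above.

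For the concrete description, I would chase a sheaf $\mcal{K} \in \QCoh(X \times Y)$ through the equivalences in reverse. Viewing $\mcal{K}$ as a functor $\mcal{K} \colon \Vect_k \to \QCoh(X \times Y) \simeq \QCoh(X) \otimes \QCoh(Y)$ sending $k \mapsto \mcal{K}$, the associated functor $F \colon \QCoh(X) \to \QCoh(Y)$ is the composite
$$
\xymatrix{
\QCoh(X) \ar[rr]^-{\Id \otimes \mcal{K}} && \QCoh(X) \otimes \QCoh(X) \otimes \QCoh(Y) \ar[rr]^-{\ev \otimes \Id} && \QCoh(Y).
}
$$
Using the BFN equivalence to identify this triple tensor product with $\QCoh(X \times X \times Y)$, the first arrow sends $\mcal{F}$ to the external tensor product $\mcal{F} \boxtimes \mcal{K} = \pi_1^* \mcal{F} \otimes \pi_{23}^* \mcal{K}$, where $\pi_1, \pi_{23}$ are the obvious projections. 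The second arrow is pull--push along
$$
\xymatrix{
X \times X \times Y && \ar[ll]_-{\Delta \times \Id_Y} X \times Y \ar[rr]^-{q_2} && Y,
}
$$
so after applying $(\Delta \times \Id_Y)^*$ one obtains $q_1^* \mcal{F} \otimes \mcal{K}$ on $X \times Y$, and then pushforward along $q_2$ yields ${q_2}_*(\mcal{K} \otimes q_1^* \mcal{F})$, as claimed.

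The only real step that is not purely formal is the identification of $\Id_{\QCoh(X)} \otimes (\ev_X \otimes \Id_{\QCoh(Y)})$ with the geometric pull--push along $\Delta \times \Id_Y$ followed by $q_2$; this in turn is a base-change/projection formula calculation under the BFN identifications, and I expect this to be the main (though still routine) technical obstacle in the argument.
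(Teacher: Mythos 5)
Your proposal is correct and follows exactly the route the paper intends: the corollary is stated as an immediate consequence of the self-duality of $\QCoh(X)$ (via $\ev = p_*\circ\Delta^*$, $\coev = \Delta_*\circ p^*$), the unit property of $\Vect_k$ in $\Cat_k$, and the BFN equivalence, and the paper in fact gives no explicit proof at all. Your chase through $\QCoh(X\times X\times Y)$ recovering ${q_2}_*(\mcal{K}\otimes q_1^*\bullet)$ is the standard verification and is carried out correctly.
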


\begin{Ex}\label{examples_of_kernels}
Let $\xymatrix{X \ar[r]^-{f} & X}$ be an endomorphism of a perfect stack. We claim that the kernel of the functor $\xymatrix{\QCoh(X) \ar[r]^-{f_*} & \QCoh(X)}$ is given by the pushforward $(\Gamma_f)_* \mcal{O}_X \in \QCoh(X \times X)$, where $\Gamma_f\colon X \to X\times X$ is the graph of $f$. Indeed, for a sheaf $\mathcal F\in \QCoh(X)$ we have a chain of natural equivalences
$${q_2}_* \big{(} (\Gamma_f)_* \mcal{O}_X  \otimes q_1^* \mcal{F} \big{)} \simeq {q_2}_* (\Gamma_f)_* \big{(} \mcal{O}_X \otimes (\Gamma_f)^* q_1^* \mcal{F} \big{)} \simeq f_* \mcal{F}
$$
where the first equivalence is the projection formula (see \cite[Proposition 3.10]{BFN}). As a special case, we see that the kernel of the identity functor on $\QCoh(X)$ is given by $\Delta_*\mathcal O_X \in \QCoh(X \times X)$, where $\xymatrix{X \ar[r]^-{\Delta} & X\times X}$ is the diagonal morphism.
\end{Ex}

\medskip

In particular, for a functor $F \in \Hom_{\Cat_k}(\QCoh(X),\QCoh(X))$ where $X$ is a perfect derived stack it makes sense to calculate the trace $\Tr_{\Cat_k}(F)$ of $F$ in terms of the corresponding kernel $\mcal{K}_F \in \QCoh(X \times X)$. Namely, we have

\begin{Prop}\label{trace_from_kernel} Let $F \in \Hom_{\Cat_k}(\QCoh(X),\QCoh(X))$ be a functor where $X$ is a perfect derived stack with the kernel given by $\mcal{K}_F \in \QCoh(X \times X)$. Then
$$
\Tr_{\Cat_k}(F) \simeq \Gamma(X,\Delta^* \mcal{K}_F) \simeq \Gamma(X \times X,\mcal{K}_F \otimes \Delta_* \mcal{O}_X) \in \Vect_k.
$$

\begin{proof}
By definition the trace is the composition 
$$\xymatrix{
\Vect_k \ar[rr]^-{\Delta_* \circ p^*} && \QCoh(X \times X) \ar[rr]^-{F \otimes \Id_{\QCoh(X)}} && \QCoh(X \times X) \ar[rr]^-{p_* \circ \Delta^*} && \Vect_k
}$$
which due to the following commutative diagram
$$\xymatrix{
\QCoh(X\times X)\ar[rr]\ar[d]_\sim && \QCoh(X\times X) \ar[d]^\sim\\
\QCoh(X)\otimes \QCoh(X) \ar[rr]^-{F\otimes \Id_{\QCoh(X)}}\ar[d]_\sim && \QCoh(X)\otimes \QCoh(X) \ar[d]^\sim\\
\Fun^{L}(\QCoh(X), \QCoh(X)) \ar[rr]_-{F \circ -} && \Fun^{L}(\QCoh(X), \QCoh(X)) \\
}$$
is given by
$$\xymatrix{
k \ar@{|->}[rr] && \Delta_* \mcal{O}_X \ar@{|->}[rr] && \mcal{K}_F \ar@{|->}[rr] && \Gamma(X,\Delta^* \mcal{K}_F)
}$$
so that we instantly obtain the desired equivalence.
\end{proof}
\end{Prop}

\begin{Ex}\ \label{ex:tr_for_qcoh}
As a toy example, consider the case where $X=\Spec R$ for $R \in \CAlg(\Vect_k)$ is affine and $F \in  \Hom_{\Cat_k}(\Mmod_R,\Mmod_R)$. Then the kernel of the functor $F$ can be described explicitly as $F(R) \in \Mmod(R \otimes R)$, where the bimodule structure arises from the fact that $R$ is commutative. Consequently, we get an equivalence
$$
\Tr_{\Cat_k}(F) \simeq \Gamma(X, \Delta^*F(R)) \simeq F(R)\otimes_{R\otimes R} R.
$$
\end{Ex}

\subsection{Calculating the trace}
\begin{Def} \label{def:derived_intersection_stack}
Let $f$ be an endomorphism of a derived stack $X$. Define a \emph{derived fixed point stack} $X^f$ of $f$ as the pullback
$$\xymatrix{
X^f \ar[r]^-{i} \ar[d]_-{i} & X \ar[d]^-{\Gamma_f} 
\\
X \ar[r]_-{{\Delta}} & X \times X
}$$
in the $\ii$-category of derived stacks. Note that by \cite[Proposition 3.24]{BFN} in the case when $X$ is perfect so is the derived fixed points stack $X^f$.
\end{Def}

Later on we will need the following

\begin{Prop} \label{prop:tr_of_f}
In the case when $X$ is perfect we have
$$
\Tr_{\Cat_k}(f_*) \simeq \Gamma(X^f, \mathcal O_{X^f}).
$$
\begin{proof}
 Since by \ref{examples_of_kernels} the kernel of the functor $f_*$ is given by $(\Gamma_f)_* \mcal{O}_X$ we have
$$\Tr_{\Cat_k}(f_*) \simeq \Gamma(X,\Delta^* (\Gamma_f)_* \mcal{O}_X).$$
Considering the pullback diagram
$$\xymatrix{
X^f \ar[r]^-{i} \ar[d]_-{i} & X \ar[d]^-{\Gamma_f} 
\\
X \ar[r]_-{{\Delta}} & X \times X
}$$
and using the projection formula we obtain an equivalence
$$
\Gamma(X,\Delta^* (\Gamma_f)_* \mcal{O}_X)  \simeq \Gamma(X,i_*i^*\mcal{O}_X) \simeq \Gamma(X,i_*\mcal{O}_{X^f}) \simeq \Gamma(X^f, \mathcal O_{X^f})$$
as claimed.
\end{proof}
\end{Prop}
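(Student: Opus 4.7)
The plan is to combine three ingredients: the preceding proposition expressing the trace of an endofunctor of $\QCoh(X)$ in terms of its integral kernel, the identification of the kernel of $f_*$, and base change along the Cartesian square defining $X^f$.

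First I would record the kernel of $f_*$. Under the self-duality equivalence $\Hom_{\Cat_k}(\QCoh(X),\QCoh(X)) \simeq \QCoh(X\times X)^{\simeq}$ discussed above, the functor $f_* : \QCoh(X) \to \QCoh(X)$ corresponds to ${\Gamma_f}_* \mcal{O}_X$, since $f_*(\bullet) \simeq {q_2}_*({\Gamma_f}_*\mcal{O}_X \otimes q_1^*\bullet)$ by the projection formula and the factorisation $f = q_2 \circ \Gamma_f$. The preceding proposition then yields
$$\Tr_{\Cat_k}(f_*) \simeq \Gamma(X,\Delta^* {\Gamma_f}_* \mcal{O}_X).$$

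Second, I would invoke derived base change along the pullback square displayed in Definition \ref{def:derived_intersection_stack}. Since this square is Cartesian in the $\ii$-category of derived schemes, the cited result \cite[Chapter I.3, Proposition 2.2.2.]{GaitsRoz} supplies a canonical equivalence of functors $\Delta^* \circ {\Gamma_f}_* \simeq i_* \circ i^*$ from $\QCoh(X)$ to $\QCoh(X)$. Evaluating at the structure sheaf and using $i^* \mcal{O}_X \simeq \mcal{O}_{X^f}$ gives $\Delta^* {\Gamma_f}_* \mcal{O}_X \simeq i_* \mcal{O}_{X^f}$.

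Finally, writing $p : X \to \pt$ for the projection, functoriality of pushforward gives $p_* \circ i_* \simeq (p \circ i)_*$, and hence
$$\Gamma(X,\Delta^* {\Gamma_f}_* \mcal{O}_X) = p_* \Delta^* {\Gamma_f}_* \mcal{O}_X \simeq (p \circ i)_* \mcal{O}_{X^f} = \Gamma(X^f,\mcal{O}_{X^f}),$$
which assembled with the first step yields the proposition. The only nontrivial input is the base-change equivalence; everything else is a chain of canonical identifications. I expect this base-change step to be the main conceptual obstacle, but in the derived setting it is automatic (unlike the classical one, no transversality or flatness hypothesis is needed), so the real work amounts to tracking the kernel of $f_*$ and unwinding the definition of $\Gamma$ as pushforward to $\pt$.
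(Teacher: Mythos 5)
Your proposal is correct and follows essentially the same route as the paper: identify the kernel of $f_*$ as ${\Gamma_f}_*\mcal{O}_X$, apply the kernel formula for traces, and then use derived base change along the Cartesian square defining $X^f$ to rewrite $\Delta^*{\Gamma_f}_*\mcal{O}_X$ as $i_*\mcal{O}_{X^f}$ before pushing forward to the point. The only difference is that you spell out the projection-formula justification for the kernel identification, which the paper takes as known.
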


\medskip

Let now $X$ be perfect derived stack together with an endomorphism $\xymatrix{X \ar[r]^-{f} & X}$ and let $\xymatrix{X^f \ar@{^(->}[r]^i & X}$ be the inclusion of the derived fixed points stack. Suppose we are given a diagram
$$\xymatrix{
\Vect_k \ar@/_/[dd]_-{\varphi} \ar[rr]^-{\Id_{\Vect_k}} && \Vect_k \ar@/_/[dd]_-{\varphi} \ar@2[ddll]_-{T}
\\
\\
\QCoh(X) \ar@/_/[uu]_-{\psi} \ar[rr]_-{f_*} && \QCoh(X) \ar@/_/[uu]_-{\psi}
}$$
in $2\Cat_k$, where $\varphi$ is the left adjoint $\psi$. Set $E:=\varphi(k)$ so that the $2$-morphism $T$ classifies some morphism $t \in \Hom_{\QCoh(X)}(E,f_*E)$. 

Note that by assumption on $X$ a quasi-coherent sheaf $E \in \QCoh(X)$ is compact if and only if it is dualizable. Indeed, in our case we have $\QCoh(X) \simeq \Ind \Perf(X)$ and $\QCoh(X)^{\dual} \simeq \Perf(X)$ and since by definition perfect complexes are closed under retracts we get an equivalence $\QCoh(X)^{\omega} \simeq \Perf(X)\simeq \QCoh(X)^{\dual}$. In particular, in what follows we can freely switch between the notions of compact and dualizable objects.

We now can calculate the Chern character $\ch(E,t)$ (definition \ref{def:chern}) in terms of $E$. By the previous proposition $\ch(E,t)$ can be seen as a map $\xymatrix{k \ar[r] & \Gamma(X^f,\mcal{O}_{X^f})}$ classifying some element of $\Gamma(X^f,\mcal{O}_{X^f}) \simeq \Hom_{\QCoh(X^f)}(\mathcal O_{X^f}, \mathcal O_{X^f})$. The next proposition provides a more concrete description of this endomorphism:

\medskip

\begin{Prop}\label{prop:chern_in_ag}
We have
$$\ch(E,t) = \Tr_{\QCoh(X^f)}\left(\xymatrix{i^* E \simeq i^* f^* E \ar[r]^-{i^*(b)} & i^* E}\right)$$
where $\xymatrix{f^*E \ar[r]^-{b} & E}$ is the morphism which corresponds to $t \in \Hom_{\QCoh(X)}(E,f_* E)$ using the adjunction between $f^*$ and $f_*$.

\begin{proof}
By definition \ref{def:chern} the Chern character $\ch(E,t)$ is the composite in $\Vect_k$
$$\xymatrix{
k \ar[r]^-{(1)} & \Gamma(X, E \otimes E^{\vee}) \ar[r]^-{(2)} & \Gamma(X,f_*E \otimes E^{\vee}) \ar[r]^-{(3)} & \Gamma(X, \Delta^* (\Gamma_f)_* \mcal{O}_X)
}$$
obtained from the $2$-commutative diagram
$$\xymatrix{
\Vect_k \ar[dd]_-{\Id_{\Vect_k}} \ar[rr]^-{\Id_{\Vect_k}} && \Vect_k \ar@2[ddll]  \ar[rr]^-{\Id_{\Vect_k}} \ar[dd]_-{E \boxtimes E^{\vee}} && \Vect_k \ar[rr]^-{\Id_{\Vect_k}} \ar[dd]^-{E \boxtimes E^{\vee}} \ar@2[ddll] && \Vect_k \ar[dd]^-{\Id_{\Vect_k}} \ar@2[ddll]
\\
\\
\Vect_k \ar[rr]_-{\Delta_* \mcal{O}_X} && \QCoh(X \times X) \ar[rr]_-{(f \times \Id_X)_*} && \QCoh(X \times X) \ar[rr]_-{\Gamma(X,\Delta^* -)} && \Vect_k
}$$
where $E \boxtimes E^{\vee}:=q_1^* E \otimes q_2^* E^{\vee}$ and
$$\xymatrix{
X && X \times X \ar[ll]_-{q_1} \ar[rr]^-{q_2} && X
}$$
are the projection maps. We first note that the composition $(2) \circ (1)$ is induced by the choice of the morphism 
$$
t \in \Hom_{\QCoh(X)}(E,f_*E) \simeq \Gamma\big{(}X,f_*E \otimes E^{\vee}\big{)}.
$$
Now the morphism $(3)$ is obtained by applying the functor $\Gamma\big{(}X, \Delta^* (f \times \Id_X)_* -\big{)}$ to the composite
$$\xymatrix{
\psi: E \boxtimes E^{\vee} \ar[r] & \Delta_* \Delta^* (E \boxtimes E^{\vee}) \simeq \Delta_* (E \otimes E^{\vee}) \ar[rr]^-{\Delta_*(\ev_{E})} && \Delta_* \mcal{O}_X
}$$
where the first map is induced by the unit of the adjunction between $\Delta^*$ and $\Delta_*$. Using the pullback square
$$\xymatrix{
X \ar[r]^-{\Gamma_f} \ar[d]_-{f} & X \times X \ar[d]^-{f \times \Id_X} 
\\
X \ar[r]_-{\Delta} & X \times X
}$$
and the corresponding base change we obtain an equivalence of functors
\begin{align}\label{eq1}
\Gamma\big{(}X, \Delta^* (f \times \Id_X)_* - \big{)} \simeq \Gamma\big{(}X, f_* (\Gamma_f)^* - \big{)}
\end{align}
and so the morphism $\Gamma\big{(}X, \Delta^* (f \times \Id_X)_* \psi\big{)}$ can be obtained by applying $\Gamma(X,f_*-)$ to the composite
$$\xymatrix{
(\Gamma_f)^*( E \boxtimes E^{\vee}) \ar[r] & (\Gamma_f)^*\Delta_*(E \otimes E^{\vee}) \ar[r] & (\Gamma_f)^* \Delta_* \mcal{O}_X.
}$$
Now applying the equivalence of functors $(\Gamma_f)^*\Delta_* \simeq i_* i^*$ obtained from the pullback square
$$\xymatrix{
X^f \ar[r]^-{i} \ar[d]_-{i} & X \ar[d]^-{\Gamma_f} 
\\
X \ar[r]_-{{\Delta}} & X \times X
}$$
we can rewrite the latter morphism as
$$\xymatrix{
E \otimes f^* E^{\vee}  \ar[r] & i_* i^*(E \otimes f^* E^{\vee} ) \simeq i_* i^* (E \otimes E^{\vee}) \ar[r] & i_* \mcal{O}_{X^g}
}$$
where the first morphism is given by the unit of the adjunction $i_* \dashv i^*$ and the second morphism is induced by $\ev_{E}$. Consequently, we see that the whole Chern character can we written as the composite
$$\xymatrix{
\mcal{O}_{X} \ar[r] & E \otimes E^{\vee}\ar[rr]^-{t \otimes \Id_{E^{\vee}}} && f_*E \otimes E^{\vee} \simeq f_*(E \otimes f^*E) \ar[r] & f_*i_*i^* (E \otimes E^{\vee}) \ar[r] & f_*i_* \mcal{O}_{X^g}.
}$$
Now to finish the proof it is left to use the equivalence $f_* i_* \simeq i_*$, the adjunction $i^* \dashv i_*$ and that the morphism $b \in \Hom_{\QCoh(Y)}(f^*E, E)$ is defined to correspond via the adjunction $f^* \dashv f_*$ to the morphism $t \in \Hom_{\QCoh(X)}(E,f_* E)$.
\end{proof}
\end{Prop}

\begin{Rem}
More generally in the case of a correspondence $\xymatrix{X & Y \ar[l]_-{g} \ar[r]^-{f} & X}$ between perfect stack calculation similar to \ref{prop:tr_of_f} proves that $\Tr_{\Cat_k}(g^*f_*) \simeq \Gamma(X^{(g,f)}, \mathcal O_{X^{(g,f)}})$ where $X^{(g,f)}$ is the pullback
$$\xymatrix{
X^{(g,f)} \ar[r] \ar[d] & Y \ar[d]^-{(g,f)}
\\
X \ar[r]_-{\Delta} & X \times X
}$$
Given a dualizable sheaf $E \in \QCoh(X)$ together with a morphism $\xymatrix{E \ar[r]^-{t} & f_* g^*E}$ the proof of \ref{prop:chern_in_ag} can be easily adapted to show that there is an equality
$$\ch(E,t) = \Tr_{\QCoh(X^{(g,f)})}\left(\xymatrix{i^* E \simeq j^* f^* E \ar[r]^-{j^*(b)} & j^*g^* E \simeq i^*E}\right)$$
where $\xymatrix{f^*E \ar[r]^-{b} & g^* E}$ is the morphism which corresponds to $t \in \Hom_{\QCoh(X)}(E,f_*g^* E)$ using the adjunction between $f^*$ and $f_*$.
\end{Rem}

\section{Holomorphic Atiyah-Bott formula}

\begin{Conv} \label{AB_convs} For the rest of this section we will work in the following setting:
\\
1) $k$ is an algebraically closed base field.
\\
2) $X$ is a smooth proper variety over $k$ together with an endomorphism $f$ such that its graph 
$\xymatrix{X \ar[r]^-{\Gamma_f} & X \times X}$ intersects the diagonal $\xymatrix{X \ar[r]^-{\Delta} & X \times X}$ transversally.

Note that in this case we have
$$\left(\mathcal O_\Delta \otimes_{\mathcal O_{X\times X}} \mathcal O_{\Gamma_f}\right)_x \simeq \mathcal O_{\Delta,x} \otimes_{\mathcal O_{X\times X,x}} \mathcal O_{\Gamma_f,x} \simeq k$$
as $\Delta$ and $\Gamma_f$ are complete intersections by the set of transversal sections of the (stalk of) the tangent bundle $T_{X\times X,x}$. It follows that the derived fixed point scheme $X^f$ equals the ordinary intersection $\Delta \cap_{X\times X} \Gamma_f$ and is reduced. As it is proper and of dimension zero, we conclude that $X^f$ is just a disjoint union of finitely many points. We will further denote by $p$ the projection map $\xymatrix{X \ar[r]^-{p} & \ast}$.
\\
\\
3) $E \in \QCoh(X)$ is a dualizable and {\bfseries lax equivariant} quasi-coherent sheaf over $X$, that is, there is a fixed morphism 
$$\xymatrix{
f^* E \ar[r]^-{b} & E.
}$$ 
in $\QCoh(X)$. Using the adjunction between $f^*$ and $f_*$ we will denote by $t \in \Hom_{\QCoh(X)}(E,f_* E)$ the morphism which corresponds to $b$.
\end{Conv}

\subsection{Statement of Atiyah-Bott formula}

We begin with the following

\begin{Def} \label{def:Lef}
Define a {\bfseries Lefschetz number} $\Ll(E, b) \in k$  of $b$ as the trace
$$\xymatrix{
\Ll(E, b):=\Tr_{\Vect_k} \Big(\Gamma(X, E) \ar[r] & \Gamma(X,f_*f^*E) \simeq \Gamma(X, f^*E) \ar[r]^-{\Gamma(b)} & \Gamma(X, E)\Big).
}$$
\end{Def}

Our main goal is to use the formalism of traces in $(\infty,2)$-categories discussed above in order to prove the 
   
\begin{Theor}[Holomorphic Atiyah-Bott formula] \label{thm:atiyah_bott}
We have
\begin{align} \label{frml:atiyah_bott}
\Ll(E, b) = \sum_{x=f(x)} \frac{\Tr_{\Vect_k}(E_x \simeq E_{f(x)} \stackrel{b_x}{\longrightarrow} E_x)}{\det(1-d_xf)}
\end{align}
where $\xymatrix{T_{X,x} \ar[r]^-{d_x f } & T_{X,x}}$ is the differential of $f$ from the tangent space at a point $x \in X$ to itself (by basic linear algebra transversality of $\Delta$ and $\Gamma_f$ in $X\times X$ is equivalent to invertibility of $1-d_x f$ for all fixed points $x$, so denominators on the right hand side of (\ref{frml:atiyah_bott}) are nonzero and the formula makes sense).
\end{Theor}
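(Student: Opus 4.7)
My plan is to apply the categorical machinery from Sections 1 and 2 to the two-story diagram displayed in the introduction: the top square is governed by the $2$-morphism $T$ classifying $t \colon E \to f_* E$, while the bottom square is strictly commutative with the vertical functor $\Gamma = p_*$. The assumptions ensure the hypotheses of Proposition \ref{prop:functoriality_of_traces} are met: $E$ is dualizable, hence compact, so (as in Definition \ref{def:chern}) the functor $E \colon \Vect_k \to \QCoh(X)$ admits a right adjoint; and $\Gamma$ admits a right adjoint $p^!$ because $X$ is proper. Applying that proposition yields the commutative triangle whose vertices, after invoking Proposition \ref{prop:tr_of_f} and $\Tr_{2\Cat_k}(\Id_{\Vect_k}) \simeq k$, are $k$, $\Gamma(X^f, \mathcal O_{X^f})$, and $k$ respectively.

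Next I identify the diagonal composite $\Tr(\Gamma(X, E), \Id_{\Gamma(X,E)} \circ T) \in k$ with the Lefschetz number. Example \ref{ex:chern_for_vect} applied to the composite functor $\Gamma \circ E \colon \Vect_k \to \Vect_k$, which acts as tensoring with the perfect complex $\Gamma(X, E)$, shows that this trace equals $\Tr_{\Vect_k}$ of the endomorphism of $\Gamma(X, E)$ classified by the composed $2$-morphism. Unwinding the adjunction between $f^*$ and $f_*$ that relates $t$ to $b$ identifies this endomorphism with the composite
$$\Gamma(X, E) \to \Gamma(X, f_* f^* E) \simeq \Gamma(X, f^* E) \xrightarrow{\Gamma(b)} \Gamma(X, E)$$
appearing in Definition \ref{def:Lef}, so the diagonal evaluates to $\Ll(E, b)$.

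For the other leg of the triangle I first invoke Proposition \ref{prop:chern_in_ag}, which identifies $\Tr(E, T) \in \Gamma(X^f, \mathcal O_{X^f})$ with $\Tr_{\QCoh(X^f)}(i^*(b) \colon i^* E \to i^* E)$. Under transversality, $X^f$ is a reduced disjoint union of finitely many closed points, so
$$\Gamma(X^f, \mathcal O_{X^f}) \simeq \bigoplus_{x = f(x)} k, \qquad \Tr(E, T) = \sum_{x = f(x)} \Tr_{\Vect_k}(b_x) \cdot 1_x.$$
Commutativity of the triangle then reduces the whole formula to identifying the remaining map $\Tr(\Gamma, \Id_\Gamma) \colon \bigoplus_{x = f(x)} k \to k$ with the linear functional $1_x \mapsto 1/\det(1 - d_x f)$.

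The main obstacle is precisely this last identification, which is an intrinsically local computation at each fixed point. Since $\Gamma(X^f, \mathcal O_{X^f})$ splits over the fixed points, each summand can be analyzed in isolation by completing $X$ along $x$ and replacing $X$ by the formal neighborhood $\Spec \widehat{\mathcal O}_{X, x}$, where transversality guarantees that $1 - d_x f$ is invertible on $T_{X, x}$. Unwinding $\Tr(\Gamma, \Id_\Gamma)$ through the explicit recipe of Example \ref{rem:trace_concretely} and the affine description of kernels in Example \ref{ex:tr_for_qcoh} reduces the local contribution to the Euler characteristic of the Koszul complex modelling the derived intersection $\Delta \cap_{X \times X} \Gamma_f$ near $x$; a direct computation of this Euler characteristic evaluates it to $\det(1 - d_x f)^{-1}$, which finishes the proof.
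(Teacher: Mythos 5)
Your overall architecture coincides with the paper's: the two-story diagram, Proposition \ref{prop:functoriality_of_traces} producing the commutative triangle, the identification of the diagonal leg with $\Ll(E,b)$ (the paper's Lemma \ref{prop:chp_eq_lefz}), and the computation $\ch(E,t)=\sum_{x=f(x)}\Tr_{\Vect_k}(b_x)\,e_x$ via Proposition \ref{prop:chern_in_ag} all match the paper step for step. The entire weight of the theorem therefore rests on the final identification $\int^f(e_x)=1/\det(1-d_xf)$ (the paper's Proposition \ref{prop:tr_of_gamma}), and this is precisely where your argument has a genuine gap.

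Your proposed reduction is to ``the Euler characteristic of the Koszul complex modelling the derived intersection $\Delta\cap_{X\times X}\Gamma_f$ near $x$.'' But under the standing transversality hypothesis that Koszul complex is quasi-isomorphic to the residue field $k$ concentrated in degree zero, so its Euler characteristic is $1$, not $\det(1-d_xf)^{-1}$; the determinant cannot arise as the Euler characteristic of an object, only as the alternating trace of an operator acting on one. What $\int^f=\Tr(\Gamma,\Id_\Gamma)$ actually unwinds to involves the counit of the adjunction between $p_*$ and $p^!$, i.e.\ the Serre--Grothendieck duality trace, and evaluating it on $e_x$ is a local residue computation --- doable, but substantially more than the one-line assertion you give, and not supplied by Examples \ref{rem:trace_concretely} and \ref{ex:tr_for_qcoh} alone. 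The paper sidesteps this computation entirely by a bootstrapping trick: it applies the already-established abstract identity $\Ll(E,b)=\int^f\ch(E,t)$ to the skyscraper sheaf $E=x_*\mcal{O}_k$ at a fixed point with $t=\Id$. For that choice the left-hand side is trivially $1$, while Proposition \ref{prop:calculatin_ch} identifies $\ch(x_*\mcal{O}_k,\Id)_x$ with the trace of the induced endomorphism of the derived stalk $(x_*\mcal{O}_k)_x$, whose cohomology is $\Lambda^{\bullet}T_{X,x}$ with $d_xf$ acting; the linear-algebra identity $\sum_p(-1)^p\Tr\Lambda^p(A)=\det(1-A)$ then forces $\int^f(e_x)=1/\det(1-d_xf)$ with no duality theory at all. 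To complete your proof you must either carry out the local duality/residue computation honestly or adopt the paper's skyscraper argument.
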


\begin{Ex}
Take $X:=\mathbb P^1$ with homogeneous coordinates $(z:w)$. Let $f(z)=e^{i\phi}z$ be a rotation automorphism by some nonzero angle $\phi$ and set $E:=\mathcal O_{\mathbb P^1}(n)$. Note that $\mathcal O_{\mathbb P^1}(-1)$ has a tautological $\GL_2$-equivariant structure which induces a $\GL_2$-equivariant structure on $\mathcal O_{\mathbb P^1}(n)$. We then define an $f$-equivariant structure on $E$ by considering $e^{i\phi}$ as an element of $\GL_2$
$$\left(\begin{matrix}
e^{i\phi} & 0 \\
0 &         1
\end{matrix}\right) \in \GL_2 .$$

The morphism $f$ has two fixed points $0$ and $\infty$. The stalks of $E$ at $0$ and $\infty$ are generated by $w^n$ and $z^n$ respectively. Hence the right hand side of Atiyah-Bott in this case is equal to
\begin{align}
 \label{ex:P1_RHS}
\frac{1}{1-e^{i\phi}} + \frac{e^{i\phi n}}{1-e^{-i\phi}}=\frac{e^{i\phi(n+1)}-1}{e^{i\phi}-1}
\end{align}

For the left hand side of Atiyah-Bott we have three slightly different cases

\begin{itemize}
\item Let $n\ge 0$. In this case $H^0(\mathbb P^1, \mathcal O_{\mathbb P^1}(n))$ is the only nontrivial cohomology group of $\Gamma(\mathbb P^1, \mathcal O_{\mathbb P^1}(n))$ with the basis of the form $z^k w^{n-k}, 0\le k \le n$. It follows that the Lefschetz number $L$ is equal to
$$
L=\Tr_{\Vect_k^{\heartsuit}}(f^*_{|H^0(\mathbb P^1, \mathcal O_{\mathbb P^1}(n))}) = \sum_{k=0}^n e^{i\phi k} = \frac{e^{i\phi(n+1)}-1}{e^{i\phi}-1}
$$
which coincides with (\ref{ex:P1_RHS}).

\item Let $n<-1$. By Serre duality the only nontrivial cohomology group of $\Gamma(\mathbb P^1, \mathcal O_{\mathbb P^1}(n))$ is
$$H^1(\mathbb P^1, \mathcal O_{\mathbb P^1}(n))\simeq H^0(\mathbb P^1, O_{\mathbb P^1}(-n-2))^\vee$$
with the action $z^k w^{-n-2-k} \mapsto e^{-i\phi(k+1)}z^k w^{n-k}$, $k=0,\ldots, -n-2$. We then have
$$L = -\Tr_{\Vect_k^{\heartsuit}}(f^*_{|H^1(\mathbb P^1, \mathcal O_{\mathbb P^1}(n))})= -\sum_{k=0}^{-n-2} e^{-i\phi (k+1)}=-e^{-i\phi}\frac{1-e^{i\phi(-n-1)}}{1-e^{-i\phi}}=\frac{e^{i\phi(n+1)}-1}{e^{i\phi}-1}$$
which again coincides with (\ref{ex:P1_RHS}).

\item Let $n=-1$. The sheaf $\mathcal O_{\mathbb P^1}(-1)$ is acyclic and both sides of Atiyah-Bott are equal to zero.
\end{itemize}

\end{Ex}

\begin{Cor}
We have
$$
\sum_{q=0}^{\dim X} (-1)^q \Tr_{\Vect_k^{\heartsuit}}(f^*_{|H^q(X, \mathcal O_X)}) = \sum_{x=f(x)} \frac{1}{\det(1-d_x f)}.
$$

\begin{proof}
Just set $E:=\mathcal O_X$ and $b:=\Id_{\mathcal O_X}$.
\end{proof}
\end{Cor}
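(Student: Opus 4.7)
The plan is to apply Theorem \ref{thm:atiyah_bott} directly to the pair $(E, b) = (\mathcal{O}_X, \Id_{\mathcal{O}_X})$, where we use the canonical identification $f^*\mathcal{O}_X \simeq \mathcal{O}_X$ to make sense of the equivariant datum. The corollary should then follow by unpacking the two sides of the Holomorphic Atiyah-Bott formula in this special case.

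For the right-hand side, at each fixed point $x$ the fiber $(\mathcal{O}_X)_x$ is canonically $k$, and under the identification $f^*\mathcal{O}_X \simeq \mathcal{O}_X$ the induced map $b_x$ is simply $\Id_k$. Hence
$$\Tr_{\Vect_k}\bigl((\mathcal{O}_X)_x \xrightarrow{b_x} (\mathcal{O}_X)_x\bigr) = 1$$
for every fixed point, and the numerators in (\ref{frml:atiyah_bott}) collapse, yielding exactly $\sum_{x = f(x)} 1/\det(1 - d_x f)$.

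For the left-hand side, I would trace through definition \ref{def:Lef} to see that $\Ll(\mathcal{O}_X, \Id_{\mathcal{O}_X})$ is the trace in $\Vect_k$ of the endomorphism $\Gamma(f^*) \colon \Gamma(X, \mathcal{O}_X) \to \Gamma(X, \mathcal{O}_X)$ (all the intermediate identifications $\Gamma(X, f_*f^*\mathcal{O}_X) \simeq \Gamma(X, f^*\mathcal{O}_X) \simeq \Gamma(X, \mathcal{O}_X)$ are canonical, and $\Gamma(b) = \Id$). Since $X$ is smooth and proper, the complex $\Gamma(X, \mathcal{O}_X)$ is dualizable in $\Vect_k$ with finite-dimensional cohomology $H^q(X, \mathcal{O}_X)$ concentrated in degrees $0, \ldots, \dim X$. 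Invoking the description of the trace in $\Vect_k$ as the alternating sum of ordinary traces on cohomology (cf.\ the first example after definition \ref{def:trace}), one obtains
$$\Ll(\mathcal{O}_X, \Id_{\mathcal{O}_X}) = \sum_{q=0}^{\dim X}(-1)^q \Tr_{\Vect_k^{\heartsuit}}\bigl(f^*_{|H^q(X, \mathcal{O}_X)}\bigr),$$
which is exactly the left-hand side of the corollary.

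Combining the two identifications gives the claim. There is no genuine obstacle here, since all the substantive work has already been carried out in the proof of Theorem \ref{thm:atiyah_bott}; the present statement is a direct specialization exploiting that $\mathcal{O}_X$ has one-dimensional fibers at every fixed point and that the chosen equivariant structure is the identity, so the numerators in Atiyah-Bott trivialize.
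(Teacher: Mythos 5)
Your proposal is correct and follows exactly the paper's route: the paper's entire proof is ``set $E:=\mathcal O_X$ and $b:=\Id_{\mathcal O_X}$'' and your write-up simply makes explicit the (routine) identifications that the numerators on the right become $1$ and that the Lefschetz number unwinds to the alternating sum of traces on $H^q(X,\mathcal O_X)$.
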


\begin{Cor}[Weyl character formula]
Let $G$ be a semisimple simply connected Lie group over $\mathbb C$, and $T, B$ be the maximal torus and the Borel subgroup of $G$ respectively. Take $V=V(\lambda)$ an irreducible finite dimensional representation of $G$ with highest weight $\lambda$ and let $\chi_V$ be its character. Then
$$\chi_V = \frac{\sum_{w\in W} \epsilon(w) e^{w(\lambda + \rho)}}{e^{\rho}\prod_{\alpha\in \Delta^+}(1-e^{-\alpha})}$$
where
\begin{itemize}
\item $W$ is the Weyl group,

\item $\Delta^+$ is subset of positive roots of root system $\Delta$,

\item $\rho$ is the half sum of positive roots,

\item $\epsilon(w)=(-1)^{l(w)}$, where $l(w)$ is the length of the Weyl group element, defined to be the minimal number of reflections with respect to the simple roots such that $w$ equals the product of those reflections.
\end{itemize}

\begin{proof}
Let $X:=G/B$ be a flag variety which is proper since $B$ is Borel subgroup. Set $\mathcal L_\lambda := G \times_B \mathbb C(\lambda)$ a $G$-equivariant line bundle over $X$. By the Borel-Weil-Bott theorem we have
$$H^i(X, \mathcal L_\lambda) \simeq \left \{ \begin{matrix}
V, \quad i=0 \\
0, \quad i\ne 0.
\end{matrix} \right.$$

Let $t\in T$ be generic and denote by $\xymatrix{X \ar[r]^-{L_t} & X}$ the action by left translation by $t$. Then the tangent space $T_{wB} X$ to a fixed point $wB \in X^t$ is isomorphic (as $T$-representation) to $\mathfrak g/w(\mathfrak b)$ and hence
$$
\det(1-d_{wB}L_t) = \prod_{\alpha\in \Delta^+} (1-e^{-w\alpha}(t)).
$$

By definition of $\mathcal L_\lambda$ the action of $L_t$ on the fiber $(\mathcal L_\lambda)_{wB}$ at a fixed point $wB$ is given by the multiplication by $e^{w\lambda}(t)$.

For a regular element $t\in T$ the graph of $L_t$ intersects diagonal transversely so that we can apply Atiyah-Bott to get
\begin{align*}
\chi_V(t) & = \Tr_{\Vect_k^{\heartsuit}}(L_{t|H^0(X, \mathcal L_\lambda)})= \sum_{w\in W} \frac{e^{w\lambda}(t)}{\prod_{\alpha\in \Delta^+} (1-e^{-w\alpha}(t))}=\\
& = \sum_{w\in W} \frac{e^{w\lambda}(t)}{\epsilon(w) e^{-w\rho}(t)\prod_{\alpha\in \Delta^+} (e^{\alpha/2}(t)-e^{-\alpha/2}(t))} = \frac{\sum_{w\in W} \epsilon(w) e^{w(\lambda + \rho)}(t)}{e^{\rho}(t)\prod_{\alpha\in \Delta^+}(1-e^{-\alpha}(t))}.
\end{align*}
Now the result in general case follows from the fact that regular elements are dense in $T$.
\end{proof}
\end{Cor}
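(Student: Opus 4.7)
The plan is to apply the holomorphic Atiyah--Bott formula (Theorem \ref{thm:atiyah_bott}) to the flag variety $X=G/B$ equipped with the line bundle $E:=\mathcal L_\lambda = G\times_B \mathbb C(\lambda)$ and the automorphism $f:=L_t$ given by left translation by a regular element $t\in T$, then to repackage the resulting sum in Weyl-theoretic language, and finally to extend from regular $t$ to all of $T$ by density.

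First I would set up the cohomological side. The flag variety $X=G/B$ is smooth and proper, $\mathcal L_\lambda$ is $G$-equivariant and in particular carries a canonical $L_t$-equivariant structure $b:L_t^*\mathcal L_\lambda \xrightarrow{\sim} \mathcal L_\lambda$. By Borel--Weil--Bott, $H^0(X,\mathcal L_\lambda)\simeq V(\lambda)$ and $H^{>0}$ vanishes, so the Lefschetz number of Definition \ref{def:Lef} collapses to an honest trace on $V$, namely $\Ll(E,b)=\Tr_{\Vect_k^\heartsuit}(L_t^*|_{V})=\chi_V(t)$.

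Next I would compute the local data at the fixed locus. For regular $t\in T$ the fixed set $X^{L_t}$ is identified by Bruhat decomposition with $\{wB\mid w\in W\}$, and $T_{wB}X\simeq \mathfrak g/w(\mathfrak b)$ as a $T$-representation, whose weights are exactly $\{-w\alpha:\alpha\in\Delta^+\}$. Hence
$$\det(1-d_{wB}L_t)=\prod_{\alpha\in \Delta^+}\bigl(1-e^{-w\alpha}(t)\bigr),$$
and by construction of $\mathcal L_\lambda$ the induced action on the one-dimensional fiber $(\mathcal L_\lambda)_{wB}$ is multiplication by $e^{w\lambda}(t)$. Since $t$ is regular each factor $1-e^{-w\alpha}(t)$ is nonzero, which is exactly the transversality of $\Gamma_{L_t}$ and $\Delta$ required by Theorem \ref{thm:atiyah_bott}. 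Plugging in gives
$$\chi_V(t)=\sum_{w\in W}\frac{e^{w\lambda}(t)}{\prod_{\alpha\in\Delta^+}(1-e^{-w\alpha}(t))}.$$

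The remaining work is the classical algebraic manipulation. Multiplying numerator and denominator of each summand by $e^{-w\rho}(t)\prod_{\alpha\in\Delta^+} e^{w\alpha/2}(t)$, using that $w$ permutes $\Delta^+$ up to a sign pattern whose total sign is $\epsilon(w)=(-1)^{\ell(w)}$, and factoring out the $w$-independent denominator $e^{\rho}(t)\prod_{\alpha\in\Delta^+}(1-e^{-\alpha}(t))$, I obtain
$$\chi_V(t)=\frac{\sum_{w\in W}\epsilon(w)e^{w(\lambda+\rho)}(t)}{e^{\rho}(t)\prod_{\alpha\in\Delta^+}(1-e^{-\alpha}(t))}.$$
Finally, both sides are regular functions on the Zariski-open dense locus of regular elements of $T$ (and in fact extend to $T$), so the identity holds on all of $T$.

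The only real obstacle is verifying the hypothesis of Atiyah--Bott, i.e.\ the transversality of $\Gamma_{L_t}$ and $\Delta$ at every fixed point; but as noted this is equivalent to invertibility of $1-d_{wB}L_t$, which holds precisely because regularity of $t$ ensures $e^{w\alpha}(t)\neq 1$ for every $\alpha\in\Delta^+$ and every $w\in W$. Every other step is either Borel--Weil--Bott, a standard identification of the tangent representation at $wB$, or bookkeeping with the Weyl denominator.
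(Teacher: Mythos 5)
Your proposal is correct and follows essentially the same route as the paper's proof: Borel--Weil--Bott to identify the Lefschetz number with $\chi_V(t)$, the identification $T_{wB}X\simeq \mathfrak g/w(\mathfrak b)$ and the fiber action $e^{w\lambda}(t)$ for the local terms, Atiyah--Bott for regular $t$, the Weyl-denominator manipulation, and density of regular elements to conclude. The only differences are cosmetic: you spell out the transversality check and the algebraic bookkeeping slightly more explicitly than the paper does.
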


\subsection{Proof of Atiyah-Bott formula}
We will prove the theorem \ref{thm:atiyah_bott} by interpreting both sides of the Atiyah-Bott formula as morphisms between certain traces.

\begin{Plan} \label{plan:atiyah_boott} First note that for a smooth proper $X$ the functor $\Gamma=p_*$ admits a right adjoint $p^! \simeq p^* \otimes \omega_X$ which is also continuous since both $p^*$ and $-\otimes \omega_X$ are. Hence by \ref{rem:dual_arrows_in2cat} the arrow $\xymatrix{\QCoh(X) \ar[r]^-\Gamma & \Vect_k}$ is an object of $\Arr(\Cat_k)^{\dual}$ so it makes sense to speak about traces of its endomorphisms. Now applying the functoriality of traces \ref{prop:functoriality_of_traces} to the diagram
$$\xymatrix{
{\Vect_k} \ar[d]_-{E} \ar[rr]^-{\Id_{{\Vect_k}}} & & {\Vect_k} \ar[d]^-{E} \ar@2[dll]_-{T}
\\
\QCoh(X) \ar[d]_{\Gamma} \ar[rr]_-{f_*} && \QCoh(X)  \ar[d]^-{\Gamma} 
\\
{\Vect_k} \ar[rr]_-{\Id_{{\Vect_k}}} && {\Vect_k}
}$$
in $2\Cat_k$ we obtain a commutative triangle
$$\xymatrix{
\Tr_{2\Cat_k}(\Id_{{\Vect_k}}) \ar[rr]^-{\ch(E,t)} \ar[drr]_-{\ch(\Gamma(X,E),\Id_{\Gamma(X,E)} \circ t) \indent \indent} && \Tr_{2\Cat_k}(f_*) \ar[d]^-{\int_{X^f}} .
\\
&& \Tr_{2\Cat_k}(\Id_{{\Vect_k}})
}$$
in $\Vect_k$, where $\int_{X^f} := \Tr(\Gamma, \Id_{\Gamma})$. Since $\Tr_{2\Cat_k}(\Id_{{\Vect_k}}) \simeq k$ we get an equality 
$$\ch(\Gamma(X,E),\Id_{\Gamma(X,E)}\circ t)=\int_{X^f} \ch(E,t)$$
 of two {\bfseries numbers}. The desired proof will now follow from the calculation that the right-hand and the left-hand sides of this equality are the right-hand and the left-hand sides of the Atiyah-Bott formula respectively.
\end{Plan}

Notice that we instantly get the following

\begin{Lemma} \label{prop:chp_eq_lefz}
We have
$$\ch(\Gamma(X,E),\Id_{\Gamma(X,E)}\circ t)=\Ll(E, b). \qed$$
\end{Lemma}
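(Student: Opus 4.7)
The plan is to collapse the left-hand side Chern character down to a trace in $\Vect_k$ and then to identify that trace with the one appearing in Definition \ref{def:Lef}. Concretely, unwind Definition \ref{def:chern} applied to the outer rectangle obtained by vertically composing the diagram in Plan \ref{plan:atiyah_boott}. This outer rectangle has $\Vect_k$ at all four corners, both horizontal arrows equal to $\Id_{\Vect_k}$, both vertical arrows equal to the functor $\Gamma(X,E)\otimes(\bullet)$, and composite $2$-morphism $\Id_\Gamma \circ_v T$.

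The first task is to read off what self-map of $\Gamma(X,E)$ this composite $2$-morphism classifies under the equivalence of Example \ref{ex:chern_for_vect}. By construction, $T$ classifies $t \in \Hom_{\QCoh(X)}(E, f_*E)$. After pasting with $\Id_\Gamma$ and using the canonical equivalence $\Gamma \circ f_* \simeq \Gamma$ (which comes from $p \circ f = p$ together with \cite[Chapter I.3, Proposition 2.2.2.]{GaitsRoz}), the $2$-morphism $\Id_\Gamma \circ_v T$ classifies the $\Vect_k$-linear endomorphism
$$
\xymatrix{
\tau \colon \Gamma(X,E) \ar[r]^-{\Gamma(t)} & \Gamma(X, f_*E) \ar[r]^-{\sim} & \Gamma(X,E).
}
$$
Then Example \ref{ex:chern_for_vect} immediately gives $\ch(\Gamma(X,E), \Id_{\Gamma(X,E)} \circ t) = \Tr_{\Vect_k}(\tau)$.

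It remains to match $\tau$ with the self-map used in Definition \ref{def:Lef}. Since $t$ is the mate of $b$ under the adjunction $f^* \dashv f_*$, it factors as $E \xrightarrow{\eta_E} f_*f^*E \xrightarrow{f_*(b)} f_*E$. Applying $\Gamma$ and identifying $\Gamma \circ f_* \simeq \Gamma$ on each term, this becomes exactly
$$
\xymatrix{
\Gamma(X,E) \ar[r] & \Gamma(X, f_*f^*E) \ar[r]^-{\sim} & \Gamma(X, f^*E) \ar[r]^-{\Gamma(b)} & \Gamma(X,E),
}
$$
i.e., the morphism whose $\Tr_{\Vect_k}$ is $\Ll(E,b)$ by Definition \ref{def:Lef}. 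The principal (mild) obstacle is bookkeeping: one must verify that the $2$-morphism $\Id_\Gamma \circ_v T$ produced by vertical composition in $2\Cat_k$ is genuinely classified by $\Gamma(t)$ after the identification $\Gamma \circ f_* \simeq \Gamma$, and that this identification is compatible with the formation of mates under $f^* \dashv f_*$. Once these compatibilities are in place the lemma is a formal consequence of Example \ref{ex:chern_for_vect}.
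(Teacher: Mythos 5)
Your proof is correct and is essentially the paper's own argument: the paper states this lemma with no proof at all (the \verb|\qed| is in the statement), treating it as immediate from Example \ref{ex:chern_for_vect} once one observes that the composite $2$-morphism $\Id_\Gamma \circ_v T$ classifies precisely the Lefschetz endomorphism of $\Gamma(X,E)$ appearing in Definition \ref{def:Lef}. Your unwinding of $t$ as the mate of $b$ and the compatibility with $\Gamma\circ f_*\simeq\Gamma$ is exactly the bookkeeping the paper leaves implicit.
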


\medskip

We now wish to calculate the morphism 

$$\xymatrix{
k \simeq\Tr_{2\Cat_k}(\Id_{\Vect_k}) \ar[rr]^-{\ch(E,t)} && \Tr_{2\Cat_k}(f_*).
}$$ 
By the assumptions on $X$ and $f$ this can be done instantly:

\begin{Prop}
We have
$$\Tr_{2\Cat_k}(f_*) \simeq \bigoplus_{x=f(x)} ke_x$$
where for each fixed point $x$ we set $e_x:=1\in \Gamma(\{x\}, \mathcal O_{x})$.

\begin{proof}
Follows from proposition \ref{prop:tr_of_f} because for $X$ and $f$ in our setting \ref{AB_convs} the derived intersection $X^f$ is equivalent to the ordinary (discrete) one. 
\end{proof}
\end{Prop}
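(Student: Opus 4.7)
The plan is to reduce the statement directly to proposition \ref{prop:tr_of_f} and then invoke the transversality hypothesis that was spelled out in the Conventions at the beginning of section 3. By proposition \ref{prop:tr_of_f} we already know
$$\Tr_{2\Cat_k}(f_*) \simeq \Gamma(X^f, \mathcal O_{X^f}),$$
so the only thing left to do is to identify the right-hand side with $\bigoplus_{x=f(x)} k\, e_x$.

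First I would recall the computation carried out in the Conventions: since $\Delta$ and $\Gamma_f$ meet transversally, at every intersection point $x$ the stalk of the derived tensor product $\mathcal O_\Delta \otimes^{\Ll}_{\mathcal O_{X\times X}} \mathcal O_{\Gamma_f}$ collapses to $k$ in degree zero, because $\Delta$ and $\Gamma_f$ are cut out by transversal regular sequences in the tangent bundle $T_{X\times X, x}$ and a Koszul-type argument shows the higher Tor groups vanish. Consequently the derived fixed point scheme $X^f$ coincides with the classical scheme-theoretic intersection $\Delta \cap_{X\times X} \Gamma_f$, with no derived enhancement.

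Next I would use properness of $X$ (so the intersection is proper) together with the fact that the intersection is zero-dimensional and reduced to conclude that $X^f$ is a finite disjoint union of reduced $k$-points, one for each honest fixed point $x = f(x)$. Since $k$ is algebraically closed, each such point has residue field $k$.

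Finally, the global sections functor turns a finite coproduct of points into a finite direct sum:
$$\Gamma(X^f, \mathcal O_{X^f}) \simeq \bigoplus_{x=f(x)} \Gamma(\{x\}, \mathcal O_x) \simeq \bigoplus_{x=f(x)} k\,e_x,$$
which is the desired description, with the generator $e_x$ being the unit $1 \in \Gamma(\{x\}, \mathcal O_x)$. The only step that requires any real content is the identification of the derived intersection with the classical one, and that work has already been done in the Conventions; everything else is bookkeeping.
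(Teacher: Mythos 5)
Your proposal is correct and follows exactly the same route as the paper: apply Proposition \ref{prop:tr_of_f} and then invoke the transversality computation already carried out in the Conventions of Section 3 to identify the derived fixed point scheme with the classical finite reduced one. The paper's own proof is just a one-line citation of these same two ingredients, so your write-up is simply a more detailed version of the intended argument.
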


It follows that we can write
$$\ch(E,t) = \sum\limits_{x=f(x)} \ch(E,t)_x e_x$$
for some $\ch(E,t)_x\in k$. 

\begin{Prop} \label{prop:calculatin_ch}
We have
$$\xymatrix{
\ch(E,t)_x = \Tr_{\Vect_k}(E_x\simeq E_{f(x)} \ar[r]^-{b_x} & E_x)
}$$
where by $E_x$ we mean the derived stalk of $E$ at the point $x \in X$.
\begin{proof}
Instant from proposition \ref{prop:chern_in_ag}.
\end{proof}
\end{Prop}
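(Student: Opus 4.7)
The plan is to bootstrap off of proposition \ref{prop:chern_in_ag}, which identifies $\ch(E,t)$ with the trace
$$\Tr_{\QCoh(X^f)}\!\left(\xymatrix{i^* E \simeq i^* f^* E \ar[r]^-{i^*(b)} & i^* E}\right) \in \Gamma(X^f, \mathcal O_{X^f}),$$
and then to decompose this expression pointwise using the hypothesis that $X^f$ is a disjoint union of finitely many reduced points.

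First, under the transversality assumption on $\Gamma_f$ and $\Delta$, we have seen that $X^f \simeq \bigsqcup_{x=f(x)} \{x\}$. Consequently, $\QCoh(X^f) \simeq \bigoplus_{x=f(x)} \Vect_k$ as symmetric monoidal $k$-linear $\ii$-categories, and this decomposition is compatible with the self-duality of $\QCoh(X^f)$ constructed in section 2.1: the evaluation and coevaluation maps for $\QCoh(X^f)$ are the direct sums of those for each factor $\Vect_k$. Tracing through definition \ref{def:trace}, it follows that for any endomorphism $\varphi$ of an object $\mathcal F \in \QCoh(X^f)$ with component endomorphisms $\varphi_x$ of $\mathcal F_x \in \Vect_k$, the trace $\Tr_{\QCoh(X^f)}(\varphi) \in \Gamma(X^f,\mathcal O_{X^f}) \simeq \bigoplus_{x=f(x)} k e_x$ is simply
$$\Tr_{\QCoh(X^f)}(\varphi) = \sum_{x=f(x)} \Tr_{\Vect_k}(\varphi_x) \cdot e_x.$$

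Next, I would identify the $x$-component of $i^* E$ with the derived stalk $E_x$ (by definition, as $\{x\} \hookrightarrow X^f \xrightarrow{i} X$ factors through the inclusion of $x$ into $X$), and the $x$-component of $i^*(b)$ with $b_x: E_{f(x)} \to E_x$ under the canonical identification $E_x \simeq E_{f(x)}$ coming from $x$ being a fixed point. Combining with proposition \ref{prop:chern_in_ag} and the decomposition above yields
$$\ch(E,t) = \sum_{x=f(x)} \Tr_{\Vect_k}\!\Big(\xymatrix{E_x \simeq E_{f(x)} \ar[r]^-{b_x} & E_x}\Big) \cdot e_x,$$
which is exactly the desired formula by reading off coefficients.

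I do not anticipate serious obstacles here; the only mild verification is the compatibility of the self-duality and trace constructions with direct sum decompositions of dualizable objects, which is a direct unwinding of definition \ref{def:trace}. The derived stalk identification is formal once one recalls that the transversality hypothesis guarantees $X^f$ is honestly reduced and discrete, so no higher Tor terms intervene.
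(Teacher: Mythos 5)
Your proof is correct and follows the same route as the paper, which simply declares the statement ``instant from Proposition \ref{prop:chern_in_ag}''; you have supplied the unwinding the authors leave implicit, namely that transversality makes $X^f$ a finite reduced discrete scheme, so $\QCoh(X^f)\simeq\bigoplus_{x=f(x)}\Vect_k$ compatibly with the duality data, the trace splits as a sum of traces of the components, and the $x$-component of $i^*(b)$ is exactly $E_x\simeq E_{f(x)}\xrightarrow{b_x}E_x$ on derived stalks. All of these verifications are sound, so there is nothing to correct.
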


\medskip

It is only left now to understand the functional
$$\xymatrix{
\Tr_{2\Cat_k}(f_*) \ar[rr]^-{\int_{X^f}} && \Tr_{2\Cat_k}(\Id_{{\Vect_k}}) \simeq k.
}$$
We have the following

\begin{Prop} \label{prop:tr_of_gamma}
The map
$$\xymatrix{
\bigoplus\limits_{x=f(x)} k e_x \simeq \Tr_{2\Cat_k}(f_*) \ar[rr]^-{\int_{X^f}} && \Tr_{2\Cat_k}(\Id_{{\Vect_k}}) \simeq k
}$$
sends $e_x$ to $1/{\det(1-d_x f)} \in k$, where $\xymatrix{T_{X,x} \ar[r]^-{d_x f } & T_{X,x}}$ is the differential of $f$ from the tangent space at the point $x \in X$ to itself.
\end{Prop}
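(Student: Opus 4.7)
The plan is to unravel $\int^f = \Tr(\Gamma, \Id_{\Gamma})$ via the arrow-category trace construction (Definition \ref{def:map_of_traces} and Example \ref{rem:trace_concretely}), identify it with an explicit Grothendieck--Serre duality pairing, and then compute the contribution of each fixed point by a local Koszul residue.

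First I would set up the relevant adjunction. Since $X$ is smooth and proper, $\Gamma = p_*$ admits a continuous right adjoint $p^{!}(-) \simeq (-) \otimes_k \omega_X[\dim X]$, so $\Gamma \in \Arr(2\Cat_k)$ is dualizable by Proposition \ref{rem:dualizable_in_arrows} and the formalism of Definition \ref{def:map_of_traces} applies. Unwinding Example \ref{rem:trace_concretely} for this data, combined with the kernel description of Example \ref{ex:tr_for_qcoh} and the base-change computation in the proof of Proposition \ref{prop:tr_of_f}, I would identify $\int^f$ with the composite
$$
\Gamma(X^f, \mathcal{O}_{X^f}) \;\simeq\; \Gamma(X, i_* \mathcal{O}_{X^f}) \;\xrightarrow{\;\Gamma(\eta)\;}\; \Gamma(X, \omega_X[\dim X]) \;\xrightarrow{\;\mathrm{tr}^{\mathrm{Serre}}_X\;}\; k,
$$
where $\eta \colon i_* \mathcal{O}_{X^f} \to \omega_X[\dim X]$ is the Grothendieck duality counit for the closed immersion $i \colon X^f \hookrightarrow X$ (this is well-defined because by transversality $X^f$ is reduced and $0$-dimensional, so $\omega_{X^f} \simeq \mathcal{O}_{X^f}$ canonically and $i^{!}\omega_X[\dim X] \simeq \omega_{X^f}$), and $\mathrm{tr}^{\mathrm{Serre}}_X$ is the classical Serre trace.

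Next I would reduce to a local calculation at each fixed point. Since $X^f = \bigsqcup_{x=f(x)} \{x\}$ is a finite disjoint union of reduced points, the map $\eta$ and hence the displayed composite split as direct sums over the fixed points; it suffices to compute $\int^f(e_x)$ for a single $x$. Pick étale coordinates $(y_1, \ldots, y_n)$ on a neighborhood of $x$ vanishing at $x$; by transversality the functions $g_i := y_i - f^{*} y_i$ form a regular sequence in $\mathcal{O}_{X,x}$ cutting out $\{x\}$, and their Jacobian at $x$ is precisely $\det(1 - d_x f) \in \mathcal{O}_{X,x}^{\times}$. The Koszul resolution $K^{\bullet}(g_1, \ldots, g_n) \to \mathcal{O}_x$ presents $\eta(e_x)$ as the local-cohomology class $[dy_1 \wedge \cdots \wedge dy_n / (g_1 \cdots g_n)] \in H^{n}_{\{x\}}(X, \omega_X)$. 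Applying the Serre trace via the local residue, and using $g_1 \cdots g_n \equiv \det(1 - d_x f) \cdot y_1 \cdots y_n$ modulo higher-order terms, yields
$$
\int^{f}(e_x) \;=\; \operatorname{Res}_x\!\left( \frac{dy_1 \wedge \cdots \wedge dy_n}{g_1 \cdots g_n} \right) \;=\; \frac{1}{\det(1 - d_x f)}.
$$

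The hard part is the first step: carefully translating the abstract trace in $\Arr(2\Cat_k)$ into the Grothendieck--Serre duality counit on $X$, tracking the unit and counit of the adjunction $\Gamma \dashv p^{!}$ that appear in Example \ref{rem:trace_concretely} and matching them with the fundamental class $\eta$. Once this identification is in place, the localization step is immediate from the decomposition of $X^f$, and the final computation is the classical local Atiyah--Bott residue.
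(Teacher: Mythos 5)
Your route is genuinely different from the paper's, and it breaks down at exactly the step you defer as ``the hard part''---but the issue is not merely that the step is hard: the identification you propose for $\int^f$ is inconsistent with your own residue computation. If $\eta$ really were the Grothendieck duality counit $i_*i^!\omega_X[\dim X]\to\omega_X[\dim X]$ combined with the \emph{canonical} trivialization $i^!\omega_X[\dim X]\simeq\omega_{X^f}\simeq\mathcal O_{X^f}$, then by transitivity of Grothendieck trace maps ($\mathrm{tr}_{p\circ i}=\mathrm{tr}_p\circ p_*(\mathrm{tr}_i)$) your composite $\Gamma(X^f,\mathcal O_{X^f})\to k$ would be the Grothendieck trace of the finite reduced scheme $X^f\to\ast$, which sends $e_x$ to $1$, not to $1/\det(1-d_xf)$. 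Your residue computation instead evaluates the generalized fraction $\bigl[dy_1\wedge\cdots\wedge dy_n/(g_1,\dots,g_n)\bigr]$ with $g_i=y_i-f^*y_i$; this is the image of $e_x$ under the map $i_*\mathcal O_{X^f}\to\omega_X[\dim X]$ normalized by the Koszul resolution on the $g_i$, and that trivialization of $i^!\omega_X[\dim X]$ differs from the canonical one by precisely the unit $\det(1-d_xf)$. The entire content of the proposition is the discrepancy between these two normalizations, so you are asserting one normalization and computing with the other; the right answer is reached only by this switch. To make the direct approach work you would have to actually unwind Example \ref{rem:trace_concretely} for $\varphi=\Gamma$, $\psi=p^!$---tracking the unit of $p_*\dashv p^!$, the self-duality data $(\Delta_*p^*,\ p_*\Delta^*)$ of $\QCoh(X)$, and the base-change identification $\Delta^*{\Gamma_f}_*\mathcal O_X\simeq i_*\mathcal O_{X^f}$---and show that the resulting map is the Koszul-normalized one. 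That verification is the theorem, and it is nowhere in the proposal.

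For comparison, the paper sidesteps duality theory entirely by a bootstrapping trick: it already has the identity $\Ll(E,b)=\int^f\ch(E,t)$ for every lax-equivariant $E$ (functoriality of traces) and the formula $\ch(E,t)_x=\Tr_{\Vect_k}(b_x\colon E_x\to E_x)$ on derived stalks (Propositions \ref{prop:chern_in_ag} and \ref{prop:calculatin_ch}). Plugging in the skyscraper $E=x_*\mathcal O_k$ with $t=\Id$ gives $1=\Ll(E,\Id)=\lambda_x\cdot\Tr_{\Vect_k}\bigl((x_*\mathcal O_k)_x\to(x_*\mathcal O_k)_x\bigr)$, and the derived stalk of the skyscraper is computed by the Koszul complex, with cohomology $\Lambda^\bullet T_{X,x}$, on which the induced endomorphism has alternating trace $\sum_p(-1)^p\Tr\Lambda^p(d_xf)=\det(1-d_xf)$ by the characteristic-polynomial lemma. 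This determines $\lambda_x=1/\det(1-d_xf)$ with no mention of $\omega_X$ or residues. If you wish to keep your direct route, you must replace the appeal to the canonical counit by an honest identification of the trace-theoretic map with the Koszul-normalized fundamental class; otherwise the skyscraper argument is both shorter and complete.
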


From proposition \ref{prop:tr_of_gamma} it is straightforward to derive the 
\begin{proof}[Proof of Atiyah-Bott formula~\ref{thm:atiyah_bott}]
From proposition \ref{prop:functoriality_of_traces} we know that
$$\ch(\Gamma(X,E),\Id_{\Gamma(X,E)}\circ t) = \int_{X^f} \ch(E,t).$$
But by proposition \ref{prop:chp_eq_lefz} we have
$$\ch(\Gamma(X,E),\Id_{\Gamma(X,E)}\circ t)=\Ll(E, b)$$
and by propositions \ref{prop:calculatin_ch} and \ref{prop:tr_of_gamma} we have
$$\int_{X^f} \ch(E,t)= \sum_{x=f(x)} \ch(E,t)_x \cdot \int_{X^f}(e_x) = \sum_{x=f(x)} \frac{\Tr_{\Vect_k}(E_x \stackrel{b_x}{\longrightarrow} E_x)}{\det(1-d_x f)}.$$
\end{proof}

\begin{proof}[Proof of Proposition~\ref{prop:tr_of_gamma}]
Set $\lambda_x:=\int_{X^f}(e_x)$. In order to to find $\lambda_x$ we will apply the formula
$$\ch(\Gamma(X,E),\Id_{\Gamma(X,E)}\circ t)=\int_{X^f} \ch(E,t)$$
in the simplest case $E:=x_* \mcal{O}_k,$ the skyscraper sheaf of a fixed point $x \in X$ which we consider as a lax equivariant sheaf with $t:=\Id_{x_* \mcal{O}_k}$. In this case we have an equality
$$\xymatrix{
1=\Ll(E, b)=\int_{X^f} \ch(\mcal{O}_x,\Id_{x_* \mcal{O}_k})=\lambda_x \Tr_{\Vect_k} \Big( (x_* \mcal{O}_k)_x \ar[r]^-{b_x} &(x_* \mcal{O}_k)_x \Big)
}$$
so that it is left to calculate the trace $\xymatrix{\Tr_{\Vect_k}  \Big( (x_* \mcal{O}_k)_x \ar[r]^-{b_x} &(x_* \mcal{O}_k)_x \Big)}$. Since by the assumption our variety $X$ is smooth at $x$, we have $H^p(X, x_* \mcal{O}_k) \simeq \Lambda^p(T_{X,x})$. Now the statement follows by setting $V:=T_{X,x}$, $A:=d_x f$ and $s=-1$ in the following well-known linear algebra lemma.

\begin{Lemma}
Let $\xymatrix{V \ar[r]^-{A} & V}$ be a linear map from a finite dimensional vector space $V$ to itself, and set $p_A(s) := \det(1 + sA)$. Then
$$p_A(s) = \sum_{p=0}^{\dim V} \Tr \Lambda^p(A) s^p. $$
\end{Lemma}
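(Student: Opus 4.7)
The plan is to reduce to the diagonalizable case and expand both sides in terms of the eigenvalues of $A$. Both $\det(1+sA)$ and $\sum_{p} \Tr(\Lambda^p A)\, s^p$ are polynomial functions of the entries of $A$ (with respect to any fixed basis of $V$), so it suffices to verify the identity on a Zariski-dense subset of $\End(V)$. Since the ambient field $k$ is algebraically closed, the diagonalizable operators are dense in $\End(V)$, and so we may assume $A$ to be diagonalizable.

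Fix a basis $v_1,\ldots,v_n$ of $V$ in which $A$ acts by $Av_i = \lambda_i v_i$. The induced operator $\Lambda^p(A)$ is then diagonal in the standard basis $\{v_{i_1}\wedge\cdots\wedge v_{i_p}\}_{i_1<\cdots<i_p}$ of $\Lambda^p V$, with eigenvalue $\lambda_{i_1}\cdots\lambda_{i_p}$ on the corresponding basis vector. Summing these eigenvalues yields $\Tr\,\Lambda^p(A)=e_p(\lambda_1,\ldots,\lambda_n)$, the $p$-th elementary symmetric polynomial in the $\lambda_i$. On the other hand, the eigenvalues of $1+sA$ are $1+s\lambda_i$, so $\det(1+sA)=\prod_{i=1}^n(1+s\lambda_i)=\sum_{p=0}^n e_p(\lambda_1,\ldots,\lambda_n)\,s^p$. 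Comparing coefficients of $s^p$ gives the desired equality.

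There is no real obstacle: the statement is a classical identity in multilinear algebra, and the only subtle point is the density reduction, which is needed because not every $A$ is diagonalizable. An equally valid alternative is to pass to the algebraic closure and triangularize $A$; the trace of $\Lambda^p(A)$ depends only on the diagonal entries of any upper-triangular representative, so the same eigenvalue computation goes through verbatim. In either formulation, the content of the lemma is precisely the generating-function identity $\prod_{i}(1+s\lambda_i)=\sum_{p}e_p(\lambda)\,s^p$ for the elementary symmetric polynomials.
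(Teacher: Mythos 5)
Your proof is correct. The paper itself offers no argument for this lemma---it is invoked as a ``well-known linear algebra lemma'' at the end of the proof of Proposition~\ref{prop:tr_of_gamma}---so there is nothing to compare against; your argument (reduction to the diagonalizable case by Zariski density over the algebraically closed base field $k$, computation of the eigenvalues of $\Lambda^p(A)$ as the products $\lambda_{i_1}\cdots\lambda_{i_p}$, and the generating-function identity $\prod_i(1+s\lambda_i)=\sum_p e_p(\lambda)\,s^p$) is the standard one and is complete. The only cosmetic remark is that calling $\det(1+sA)$ the ``characteristic polynomial'' follows the paper's slightly nonstandard usage; the substance of your argument is unaffected.
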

\end{proof}

\begin{bibdiv}
\begin{biblist}

\bib{AtiyahBott_originalI}{article}{
      author={Atiyah, M. F.},
      author={Bott, R},
       title={A Lefschetz Fixed Point Formula for Elliptic Complexes I},
        date={1967},
     journal={Ann. of Math.},
      volume={86},
      number={2},
       pages={374-407},
}

\bib{AtiyahBott_originalII}{article}{
      author={Atiyah, M. F.},
      author={Bott, R},
       title={A Lefschetz Fixed Point Formula for Elliptic Complexes II. Applications},
        date={1968},
     journal={Ann. of Math.},
      volume={88},
      number={3},
       pages={451-491},
}

\bib{BN_ntr}{article}{
      author={Ben-Zvi, David},
      author={Nadler, David},
       title={Nonlinear traces},
        date={2013},
      eprint={http://arxiv.org/abs/1305.7175v3},
}

\bib{BFN}{article}{
      author={Ben-Zvi, David},
      author={Francis, John},
      author={Nadler, David},
       title={Integral transforms and {D}rinfeld centers in derived algebraic geometry},
        date={2010},
        ISSN={0894-0347},
     journal={J. Amer. Math. Soc.},
      volume={23},
      number={4},
       pages={909\ndash 966},
      eprint={http://arxiv.org/abs/0805.0157},
}

\bib{GaitsRoz}{article}{
      author={Gaitsgory, Dennis},
      author={Rozenblyum, Nick},
       title={A study in derived algebraic geometry},
      eprint={http://www.math.harvard.edu/~gaitsgde/GL/},
}

\bib{Lefschetz_fixed_points}{article}{
      author={Lefschetz, S.},
       title={Intersections and transformations of complexes and manifolds},
        date={1926},
     journal={Trans. Amer. Math. Soc.},
      volume={28},
       pages={1-49},
}

\bib{HA}{article}{
      author={Lurie, Jacob},
       title={Higher Algebra},
      eprint={http://www.math.harvard.edu/~lurie/papers/HA.pdf},
}

\end{biblist}
\end{bibdiv}

\bigskip
\bigskip

\noindent
Grigory~Kondyrev, {\sc Northwestern University; National Research University Higher School of Economics, Russian Federation,}
\href{mailto:gkond@math.northwestern.edu}{gkond@math.northwestern.edu}

\smallskip

\noindent
Artem~Prikhodko, {\sc National Research University Higher School of Economics, Russian Federation; Center for Advanced Studies, Skoltech, Russian Federation,}
\href{mailto:artem.n.prihodko@gmail.com}{artem.n.prihodko@gmail.com}

\end{document}